\newcommand{\mc}{\mathcal}
\newcommand{\noopsort}[1]{}
\newtheorem{theorem}{Theorem}[section]
\newtheorem{cor}[theorem]{Corollary}
\newsavebox{\Prfref}
\newsavebox{\prfref}
\newtheoremstyle{ref}% an optional reference is inserted after the theorem number
{\topsep}	%      Space above
{\topsep}	%      Space below
{\it}%         		Body font
{}%         		Indent amount (empty = no indent, \parindent = para indent)
{}%			Thm head font
{}%        		Punctuation after thm head
{ }%			Space after thm head: " " = normal interword space;
\theoremstyle{ref}
\newtheorem{lem}[theorem]{Lemma}
\newtheorem{thm}[theorem]{Theorem}
\newtheorem{defn}[theorem]{Definition}
\newtheorem{ex}[theorem]{Example}
\newtheorem{que}{Question}
\newtheoremstyle{nnref}% an optional reference is inserted after no theorem number
{\topsep}	%      Space above
{\topsep}	%      Space below
{}%         		Body font
{}%         		Indent amount (empty = no indent, \parindent = para indent)
{}%			Thm head font
{}%        		Punctuation after thm head
{ }%			Space after thm head: " " = normal interword space;
\theoremstyle{nnref}
\begin{document}
%\begin{titlepage}
\title{On the definability of Menger spaces which are not $\sigma$-compact}
\author{Franklin D. Tall{$^1$}, Se{\c{c}}il Tokg\"oz{$^2$}}

\footnotetext[1]{Research supported by NSERC grant A-7354.\vspace*{2pt}}
\footnotetext[2]{ The second author is supported by T\"UB\.{I}TAK 2219 Grant.\vspace*{2pt}}
\date{\today}
\maketitle

\begin{abstract}
\noindent Hurewicz proved completely metrizable Menger spaces are $\sigma$-compact. We extend this to \v{C}ech-complete Menger spaces, and consistently, to projective metrizable Menger spaces. On the other hand, it  is consistent that there is a co-analytic Menger metrizable space that is not $\sigma$-compact.
\end{abstract}

\renewcommand{\thefootnote}{}
\footnote
{\parbox[1.8em]{\linewidth}{$2000$ Math.\ Subj.\ Class.\ Primary 03E15, 03E35, 03E60, 54A25, 54D20, 54H05; Secondary 03E45, 54D40.}}\\
\renewcommand{\thefootnote}{}
\footnote
{\parbox[1.8em]{\linewidth}{Keywords and phrases: Menger, $\sigma$-compact, projective set of reals, analytic, co-analytic, 
K-analytic, determinacy, p-space, s-space, $\Sigma$-space.}}

\maketitle

\section {When Menger spaces are $\sigma$-compact}
\makebox[0pt]{}\vspace{-2ex}
 
 The \emph{Menger} property is a strengthening of Lindel\"ofness which  plays an important role in the study of \emph{Selection Principles}.

\begin{defn}
A space $X$ is \textbf{Menger} if, given any sequence $\{\mc{U}_n\}_{\small{n < \omega}}$ of open covers of $X$, there exist finite subsets $\mc{V}_n$ of $\mc{U}_n$ such that $\bigcup_{n < \omega}\mc{V}_n$ covers $X$.
\end{defn}
\makebox[0pt]{}\vspace{-2ex}

Hurewicz \cite{Hur25} proved that completely metrizable Menger spaces are $\sigma$-compact and conjectured that indeed all Menger spaces are.  His conjecture was disproved in \cite{MiFr}.  Since then, easier, ``natural'' counterexamples have been constructed {\textemdash } see e.g. \cite{Tsa011}.
It was apparently not realized until now that Hurewicz' theorem was not limited to metrizable spaces. We shall assume all spaces considered are completely regular. We shall prove:

\begin{thm}\label{thm1}
\v{C}ech-complete Menger  spaces are $\sigma$-compact.
\end{thm}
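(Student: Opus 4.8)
The plan is to reduce Theorem~\ref{thm1} to Hurewicz's metrizable theorem by representing $X$ as a perfect preimage of a Polish space. First I would observe that the Menger property implies Lindel\"ofness: applying the definition to the constant sequence $\mathcal{U}_n = \mathcal{U}$ extracts a countable subcover from any open cover $\mathcal{U}$. Hence $X$ is both Lindel\"of and \v{C}ech-complete. The crucial structural input is the classical fact that every Lindel\"of \v{C}ech-complete space admits a perfect map $f$ onto a separable completely metrizable (Polish) space $M$. This follows by combining Arhangel'ski\u{\i}'s theorem that \v{C}ech-complete spaces are $p$-spaces with the fact that a Lindel\"of $p$-space is a perfect preimage of a separable metrizable space, together with the observation that the metrizable target is then itself \v{C}ech-complete (being both metrizable and a perfect image of a \v{C}ech-complete space), hence Polish.

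Next I would transfer the Menger property along $f$. The Menger property is preserved by continuous surjections: given open covers $\mathcal{U}_n$ of $M$, pull them back to open covers $f^{-1}(\mathcal{U}_n)$ of $X$, apply Mengerness of $X$ to obtain finite subfamilies covering $X$, and push the corresponding finite subfamilies of the $\mathcal{U}_n$ forward using surjectivity of $f$. Thus $M$ is a Menger Polish space, and Hurewicz's theorem yields a decomposition $M = \bigcup_{n < \omega} K_n$ with each $K_n$ compact.

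Finally I would pull the compact pieces back. Since $f$ is perfect, the preimage of a compact set is compact, so each $f^{-1}(K_n)$ is compact and $X = \bigcup_{n < \omega} f^{-1}(K_n)$ is $\sigma$-compact, as desired.

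I expect the only real obstacle to be the first step: securing the representation of a Lindel\"of \v{C}ech-complete space as a perfect preimage of a Polish space, and in particular confirming that the metrizable target inherits complete metrizability (equivalently, \v{C}ech-completeness). Once this representation is in hand, the remaining ingredients — preservation of the Menger property under continuous images, Hurewicz's theorem, and compactness of perfect preimages of compacta — are all routine.
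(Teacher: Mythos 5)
Your proposal is correct and follows essentially the same route as the paper's own proof: represent the Lindel\"of \v{C}ech-complete space as a perfect preimage of a separable metrizable space (via the $p$-space machinery), note that the image is \v{C}ech-complete and Menger, apply Hurewicz's metrizable theorem, and pull the $\sigma$-compactness back through the perfect map. The only difference is that you spell out the routine details (Menger $\Rightarrow$ Lindel\"of, the push-forward of the Menger property, compactness of perfect preimages of compacta) that the paper leaves implicit.
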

\textit{p-spaces} were introduced by Arhangel'ski{\u\i}  \cite{Arh65}; an equivalent definition is due to Burke \cite{Bur70}:

\begin{defn} 
A  space $X$ is a \textbf{p-space} if there exists a sequence  $ \{ \mathcal{U}_ n \}_{n<\omega}$ of families of open subsets satisfying the following condition    if for each $n$, $x\in U_n \in \mathcal{U}_n $ then
\item i) $\bigcap\limits_{n<\omega} \overline{U_n}$ is compact,
\item ii) $\{ \bigcap\limits_{\small {i\leq n}} \overline{U_i}:n\in \omega \}$ is an outer network for the set $ \bigcap\limits_{\small{n<\omega}} \overline{U_n}$ i.e., every open set including  $\bigcap\limits_{\small{n<\omega}} \overline{U_n}$ includes some $\bigcap\limits_{\small{i\leq n}} \overline{U_i}$.
\end {defn} 
Recall that  \textbf{paracompact p-spaces} 
 are preimages of metrizable spaces under perfect mappings \cite{Arh65}. 
\begin{proof}
A \v{C}ech-complete Lindel\"of  space $X$ is a paracompact $p$-space and so is a perfect pre-image of a separable metrizable space $Y$.  \v{C}ech-completeness is preserved by perfect maps, and Mengerness by continuous maps, so if $X$ is Menger, $Y$ is Menger and \v{C}ech-complete, so by \cite{Hur25} is $\sigma$-compact.  But a perfect pre-image of a $\sigma$-compact space is $\sigma$-compact. 
\end{proof}

\begin{lem}[{~\cite[a more accessible reference is  22]{Hur27}}]\label{lem2}
The irrationals are not Menger.
\end{lem}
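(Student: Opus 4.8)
The plan is to use the classical identification of the space $P$ of irrationals with the Baire space $\omega^\omega$ (for instance via continued fractions); since the Menger property is a topological invariant, it suffices to exhibit a sequence of open covers of $\omega^\omega$ witnessing the failure of the Menger property. For a finite sequence $s$ write $[s] = \{\, f \in \omega^\omega : f \text{ extends } s \,\}$ for the basic clopen set it determines, and take
\[
\mathcal{U}_n = \{\, [s] : s \in \omega^{\,n+1} \,\}.
\]
Each $\mathcal{U}_n$ is a partition of $\omega^\omega$ into nonempty clopen sets, hence an open cover, and since $\omega^{\,n+1}$ is infinite it admits no finite subcover. These are the covers against which I will run the diagonalization.

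Next I would fix an arbitrary choice of finite subfamilies $\mathcal{V}_n \subseteq \mathcal{U}_n$ and let $F_n \subseteq \omega^{\,n+1}$ be the finite set of sequences $s$ with $[s] \in \mathcal{V}_n$. The goal is to produce a single $f \in \omega^\omega$ lying in no member of any $\mathcal{V}_n$, equivalently $f \upharpoonright (n+1) \notin F_n$ for every $n$. I build $f$ by recursion on $n$: having determined $f \upharpoonright n$, observe that only finitely many $s \in F_n$ extend $f \upharpoonright n$, and each such $s$ forbids a single value for $f(n)$; as $\omega$ is infinite I may choose $f(n)$ avoiding all of them, so that $f \upharpoonright (n+1) \notin F_n$. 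The resulting $f$ then satisfies $f \notin \bigcup_{n} \mathcal{V}_n$, so $\bigcup_n \mathcal{V}_n$ fails to cover $\omega^\omega$; since the $\mathcal{V}_n$ were arbitrary, $\omega^\omega$, and hence $P$, is not Menger.

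The argument is short, and its only delicate point — which I would flag as the crux — is the verification that the recursion never gets stuck: at stage $n$ it is precisely the \emph{finiteness} of $\mathcal{V}_n$ that guarantees only finitely many values of $f(n)$ are excluded, leaving infinitely many admissible choices. This is exactly where the Menger condition (finite subfamilies, not merely countable ones) is used, and it is what separates this diagonalization from the Lindel\"of situation, where a full countable subcover would defeat it. A more conceptual alternative would invoke Hurewicz's characterization that a metrizable space $X$ is Menger iff every continuous image of $X$ in $\omega^\omega$ is non-dominating, applied to the identity on $\omega^\omega$; but the direct diagonalization above is self-contained and matches the cited source.
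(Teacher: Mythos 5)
Your argument is correct and complete. Note that the paper offers no proof of this lemma at all---it is stated as a citation to Hurewicz's 1927 paper (with O'Farrell's article as ``a more accessible reference'')---so your diagonalization supplies an argument that the paper leaves to the literature. What you give is the standard direct proof: identify $\mathbb{P}$ with $\omega^\omega$, take the covers $\mathcal{U}_n$ by the basic clopen sets $[s]$ for $s \in \omega^{n+1}$, and observe that any finite selection $\mathcal{V}_n$ forbids, at stage $n$ of the recursion, only finitely many values of $f(n)$, so one can diagonalize to get $f \notin \bigcup_n \bigcup \mathcal{V}_n$; your verification is sound, since for $s \in F_n$ not extending $f \upharpoonright n$ the disagreement already occurs below coordinate $n$, while for the finitely many $s \in F_n$ extending $f \upharpoonright n$ you chose $f(n) \neq s(n)$. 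The point you flag as the crux---that it is precisely the finiteness of the selections which keeps the recursion from getting stuck---is exactly right, and is what distinguishes the failure of Menger from the truth of Lindel\"of here. Your suggested alternative (a Menger subset of $\omega^\omega$ cannot be dominating) is essentially Hurewicz's own route, and it is the formulation the paper implicitly relies on later when it quotes from the same source [Hur27] that every set of reals of cardinality less than $\mathfrak{d}$ is Menger; your diagonalization is the more elementary and self-contained of the two.
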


\begin{lem}[{~\cite[a more accessible reference is 17]{Hur28}}]\label{lem3}
If $Y \subseteq \mathbb{R}$ is analytic, then either $Y$ is $\sigma$-compact or $Y$ includes a closed copy of the space $\mathbb{P}$ of irrationals.
\end{lem}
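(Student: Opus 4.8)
The plan is to establish the non-trivial direction of this classical Hurewicz dichotomy: assuming the analytic set $Y$ is not $\sigma$-compact, I would produce $C\subseteq Y$ that is closed in $Y$ and homeomorphic to the Baire space $\mathbb{N}^{\mathbb{N}}$, which is homeomorphic to $\mathbb{P}$. First I would pass to a compact ambient space, fixing a homeomorphism of $\mathbb{R}$ onto $(0,1)\subseteq[0,1]$ so that $Y\subseteq[0,1]$ and ``escaping to the boundary'' of $Y$ has a uniform meaning. Since $Y$ is analytic I would then unfold it, writing $Y=\pi[F]$ for a closed set $F\subseteq[0,1]\times\mathbb{N}^{\mathbb{N}}$ with $\pi$ the projection to the first coordinate; the second coordinate provides, for each $y\in Y$, a witness $\beta$ with $(y,\beta)\in F$, and it is this unfolding that lets me exploit the hypothesis that $Y$ is \emph{analytic} rather than merely Polish.

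Next I would construct, by recursion on $\mathbb{N}^{<\omega}$, a Cantor scheme of boxes $B_s=V_s\times N_{w_s}$ (with $V_s\subseteq[0,1]$ open and $N_{w_s}\subseteq\mathbb{N}^{\mathbb{N}}$ basic clopen) and read off an embedding $\varphi\colon\mathbb{N}^{\mathbb{N}}\to Y$. The scheme must satisfy three properties. (i) Along each branch the boxes shrink with diameters tending to $0$ and meet $F$, so that $\bigcap_n B_{\alpha\restriction n}$ is a single point $(\varphi(\alpha),\beta_\alpha)\in F$ and $\varphi$ is continuous into $Y$. (ii) At each node $s$ the projected sibling closures $\overline{V_{s^\frown i}}$ are pairwise disjoint, making $\varphi$ injective. (iii) The decisive \emph{closedness} condition: at each node $s$ the projected siblings accumulate only off $Y$, i.e. the cluster set $\bigcap_N\overline{\bigcup_{i\ge N}V_{s^\frown i}}$ is contained in $[0,1]\setminus Y$.

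Condition (iii) does the real work. A point of $\overline{C}\setminus C$ (closure in $[0,1]$) can only arise from a sequence that switches siblings infinitely often at some node, since along any fixed branch the images converge to a point of $C$; condition (iii) forces such diagonal limits out of $Y$, so $\overline{C}\cap Y=C$ and $C$ is closed in $Y$. The same condition makes $\varphi^{-1}$ continuous, because a convergent $\varphi(\alpha_k)\to\varphi(\alpha)$ in $C$ cannot switch siblings in the limit and hence the $\alpha_k$ stabilise on longer and longer initial segments; thus $\varphi$ is a homeomorphism onto the closed copy $C$ of $\mathbb{P}$. It remains to run the recursion so that (iii) can always be met, which is the splitting lemma.

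Here lies the step I expect to be the main obstacle. I would keep the invariant that $Y_s:=\pi[B_s\cap F]$ is not $\sigma$-compact, which holds at the root precisely because $Y$ is not $\sigma$-compact. Two clean facts feed the splitting. First, letting $G$ be the union of all open $V$ with $Y\cap V$ $\sigma$-compact, the Lindel\"of property makes $Y\cap G$ $\sigma$-compact; since a non-$\sigma$-compact $Y$ cannot be closed in the compact space $[0,1]$, the complementary core $R=[0,1]\setminus G$ must contain a point $p\notin Y$ every neighbourhood of which meets $Y$ in a non-$\sigma$-compact set, and such points are exactly the legitimate targets for the sibling clusters in (iii). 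Second, the unfolding refines the witness coordinate without destroying non-$\sigma$-compactness: as $Y_s=\bigcup_k\pi[(V_s\times N_{w_s^\frown k})\cap F]$ is a countable union, some witness-child keeps a non-$\sigma$-compact projection. The delicate point, on which the use of analyticity really turns, is to coordinate these two so that within each box the witness-refined, non-$\sigma$-compact set can be funnelled toward genuine escape points of $R\setminus Y$ while keeping the witness branches convergent (so that each $\varphi(\alpha)$ does land in $Y$); this is handled by a localisation of Cantor--Bendixson type that isolates a perfect core of non-$\sigma$-compactness meeting $[0,1]\setminus Y$ densely. Granting the splitting lemma, the dichotomy resolves: either the localisation exhausts $Y$ and exhibits it as $\sigma$-compact, or the scheme runs to completion and $C=\varphi[\mathbb{N}^{\mathbb{N}}]$ is the desired closed copy of $\mathbb{P}$.
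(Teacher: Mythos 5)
The paper does not actually prove this lemma: it quotes it from Hurewicz \cite{Hur28}, pointing to Kechris \cite{Kec95} (the Hurewicz dichotomy, Theorem 21.18 there) as the accessible source, so your attempt has to stand on its own. Your architecture is indeed that of the classical proof --- unfold $Y=\pi[F]$ with $F$ closed in $[0,1]\times\mathbb{N}^{\mathbb{N}}$, build a Lusin scheme of boxes satisfying your (i)--(iii), and observe that (iii) gives both relative closedness in $Y$ and continuity of $\varphi^{-1}$. The gap is exactly where you write ``granting the splitting lemma'': that lemma \emph{is} Hurewicz's theorem, and the invariant you propose to run it with is not just unverified but provably inadequate.

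Concretely, ``$Y_s$ is not $\sigma$-compact'' does not localize. Condition (iii) requires, at node $s$, an escape point $p\notin Y$ (not merely $p\notin Y_s$) in the closure of $Y_s$. Non-$\sigma$-compactness of $Y_s$ gives $\overline{Y_s}\setminus Y_s\neq\emptyset$, but it does not give $\overline{Y_s}\not\subseteq Y$: take $Y=\mathbb{P}$, let $C\subseteq\mathbb{P}$ be a Cantor set of irrationals, and let $Z\subseteq C$ be a dense $G_\delta$ with dense complement in $C$; then $Z\cong\mathbb{N}^{\mathbb{N}}$ is analytic and not $\sigma$-compact, yet $\overline{Z}=C\subseteq Y$, so a piece of this shape admits no legitimate sibling targets and the recursion jams. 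Your argument that $R\setminus Y\neq\emptyset$ at the root (if $R\subseteq Y$, then $Y=(Y\cap G)\cup R$ is the union of a $\sigma$-compact set and a compact set) is correct, but it works only because there the piece equals all of $Y$; at deeper nodes $Y_s\subsetneq Y$, and the same computation yields only $R_s\setminus Y_s\neq\emptyset$, which is useless for (iii). Note also that the other natural strengthening, ``$Y_s$ not covered by a $K_\sigma$,'' is vacuous inside $[0,1]$. The standard repair is to work with the $\sigma$-ideal generated by the \emph{compact subsets of $Y$}: keep the invariant ``$Y_s$ is not contained in any $\sigma$-compact subset of $Y$.'' This holds at the root if and only if $Y$ is not $\sigma$-compact, passes to some witness-child in your countable decomposition (it is a $\sigma$-ideal), forces $\overline{Y_s}\not\subseteq Y$, and makes the escape set dense in the associated core: if some $V'$ with $\overline{V'}\subseteq V$ had $V'\cap R_s\subseteq Y$, then $V'\cap R_s$, being open in the compact set $R_s$, would itself be a $\sigma$-compact subset of $Y$, so $Y_s\cap V'\subseteq (V'\cap R_s)\cup(Y_s\cap G_s)$ would lie in the ideal, contradicting $V'\cap R_s\neq\emptyset$. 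With that invariant your outline can be completed into a proof; with the invariant as you stated it, the splitting lemma is false, not merely missing.
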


It follows immediately that:

\begin{thm}[{~\cite{Hur25}}]\label{thm4}
Analytic Menger subsets of $\mathbb{R}$ are $\sigma$-compact.
\end{thm}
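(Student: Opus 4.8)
The plan is to derive this immediately from Lemmas \ref{lem2} and \ref{lem3}, using one standard structural fact about the Menger property. Let $Y \subseteq \mathbb{R}$ be analytic and Menger. Lemma \ref{lem3} presents a dichotomy: either $Y$ is $\sigma$-compact, in which case we are done, or $Y$ contains a closed copy of the space $\mathbb{P}$ of irrationals. My strategy is to show that the second alternative is incompatible with $Y$ being Menger, so that only the first can hold.

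To rule out the second alternative, the key step is the observation that the Menger property is inherited by closed subspaces. This is routine: if $C$ is closed in a Menger space $Y$ and $\{\mc{U}_n\}_{n<\w}$ is a sequence of open covers of $C$, one extends each $\mc{U}_n$ to an open cover of $Y$ by adjoining the single open set $Y \setminus C$, applies the Menger property of $Y$ to this augmented sequence to extract finite subfamilies, and then discards the copies of $Y \setminus C$; the restrictions of the remaining chosen sets cover $C$. Thus a closed copy of $\mathbb{P}$ sitting inside the Menger space $Y$ would itself be Menger.

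But Mengerness is a topological property, so a space homeomorphic to $\mathbb{P}$ is Menger if and only if $\mathbb{P}$ is, and Lemma \ref{lem2} tells us $\mathbb{P}$ is \emph{not} Menger. This contradicts the conclusion of the previous paragraph, so $Y$ cannot contain a closed copy of $\mathbb{P}$. By the dichotomy of Lemma \ref{lem3}, $Y$ is therefore $\sigma$-compact.

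I do not anticipate any genuine obstacle here: the result is flagged in the text as following ``immediately,'' and indeed the only content beyond quoting the two lemmas is the closed-heredity of Mengerness, which is entirely elementary. The one point worth stating explicitly, rather than leaving implicit, is that ``not Menger'' in Lemma \ref{lem2} transfers to any homeomorphic copy, so that the closed copy of $\mathbb{P}$ furnishes a legitimate contradiction.
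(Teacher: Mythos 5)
Your proposal is correct and is exactly the argument the paper intends: the paper states the theorem ``follows immediately'' from Lemmas \ref{lem2} and \ref{lem3}, and the only implicit step is the closed-heredity of the Menger property, which you have supplied in full. Your write-up just makes explicit what the paper leaves to the reader.
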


Under the Axiom of Projective Determinacy (PD), Lemma \ref{lem3} holds for projective sets of reals  {\textemdash} see  e.g. \cite{Tal011}.  Thus

\begin{thm}\label{thm5}
PD implies projective Menger subsets of $\mathbb{R}$ are $\sigma$-compact.
\end{thm}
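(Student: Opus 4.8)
The plan is to run the proof of Theorem~\ref{thm4} essentially verbatim, substituting the projective strengthening of Lemma~\ref{lem3} granted by PD for the analytic version. Concretely, I would assume PD and let $Y \subseteq \mathbb{R}$ be projective and Menger. Since $Y$ is projective, the PD-extension of Lemma~\ref{lem3} cited from \cite{Tal011} applies, yielding a dichotomy: either $Y$ is $\sigma$-compact, in which case we are done, or $Y$ contains a closed copy $C$ of the space $\mathbb{P}$ of irrationals. The entire argument then consists of ruling out the second alternative.

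To dispose of that alternative, I would use the fact that the Menger property is inherited by closed subspaces. Given any sequence of open covers of $C$, I would extend each member to an open subset of $Y$, adjoin the open set $Y \setminus C$ (open because $C$ is closed) to form open covers of $Y$, apply the Mengerness of $Y$ to extract finite subfamilies whose union covers $Y$, and finally discard the copies of $Y \setminus C$ and trace back to $C$. This produces finite subfamilies of the original covers of $C$ witnessing that $C$ is Menger. But $C$ is homeomorphic to $\mathbb{P}$, which is not Menger by Lemma~\ref{lem2}, a contradiction. Hence the first alternative must hold, and $Y$ is $\sigma$-compact.

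The argument carries no genuine obstacle of its own: all the difficulty has been pushed into the determinacy-theoretic input, namely that PD lifts the Hurewicz dichotomy of Lemma~\ref{lem3} from analytic sets to all projective sets. The one self-contained point requiring verification is the closed-hereditariness of Mengerness sketched above, which is routine. Thus, once the PD-version of Lemma~\ref{lem3} is taken as given, Theorem~\ref{thm5} follows exactly as Theorem~\ref{thm4} did, the only change being the broader class of sets to which the dichotomy now applies.
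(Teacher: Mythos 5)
Your proposal is correct and matches the paper's argument exactly: the paper derives Theorem~\ref{thm5} immediately from the PD-extension of the Hurewicz dichotomy of Lemma~\ref{lem3} to projective sets, combined with Lemma~\ref{lem2} and the (standard, closed-hereditary) nature of the Menger property, just as Theorem~\ref{thm4} was derived in the analytic case. Your explicit verification that Mengerness passes to closed subspaces is a routine detail the paper leaves implicit, but the route is the same.
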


The reader unfamiliar with determinacy should consult Section 27 of \cite{Kan94}  and Sections 20 and 38.B of \cite{Kec95}.\\

Given our success with Theorem \ref{thm1}, one wonders whether Theorems \ref{thm4} and \ref{thm5} can be extended to Menger spaces which are not metrizable. Theorem \ref{thm4} can.

\begin{defn}[{~\cite{Arh86}}]
A space is \textbf{analytic} if it is a continuous image of the space $\mathbb{P}$ of irrationals.
\end{defn}

Arhangel'ski\u{\i} proved:

\begin{thm}[{~\cite{Arh86}}]\label{lem7}
Analytic Menger spaces are $\sigma$-compact.
\end{thm}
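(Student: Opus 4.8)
The plan is to prove a topological version of the Hurewicz dichotomy for analytic spaces and then let the Menger hypothesis eliminate the bad alternative. Precisely, I would show that an analytic space $X$ is either $\sigma$-compact or contains a closed (in $X$) copy of the irrationals $\mathbb{P}$. Granting this, the theorem is immediate: a closed subspace of a Menger space is Menger, so if the Menger space $X$ contained a closed copy of $\mathbb{P}$ then $\mathbb{P}$ would be Menger, contradicting Lemma~\ref{lem2}. Hence a Menger analytic space falls into the first alternative and is $\sigma$-compact. Thus the whole problem is to upgrade Lemma~\ref{lem3} from analytic subsets of $\mathbb{R}$ to arbitrary analytic spaces.

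Fix a continuous surjection $f \colon \omega^\omega \to X$ witnessing analyticity (so $\mathbb{P} \cong \omega^\omega$). First I record the structural facts I will use: since a countable network is preserved by continuous surjections, $X$ has a countable network (is cosmic), hence is Lindel\"of; and setting $C_\alpha = \prod_n \{0,1,\dots,\alpha(n)\}$ for $\alpha\in\omega^\omega$, the sets $K_\alpha = f[C_\alpha]$ form a \emph{compact resolution} of $X$ (each $K_\alpha$ compact, increasing in $\alpha$, with union $X$). The natural tool for the dichotomy is the Suslin scheme $F_s = \overline{f[N_s]}$ for $s\in\omega^{<\omega}$, where $N_s$ is the basic clopen set; it satisfies $F_\varnothing = X$ and $F_s \supseteq \bigcup_n F_{s^\frown n}$, and along any finitely branching subtree it yields compact sets, since the body of such a subtree is compact and $f$ preserves compactness.

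The heart of the argument is to run Hurewicz's tree analysis on this scheme. I would call a node $s$ \emph{good} if the closure $F_s$ can be captured, up to a compact remainder, by finitely many of its immediate successors, and I would attempt to prune $\omega^{<\omega}$ to a finitely branching subtree whose associated images cover $X$; success presents $X$ as a countable union of continuous images of compact sets, i.e.\ as $\sigma$-compact. If the pruning fails, the obstruction is concentrated at cofinally many nodes at which infinitely many successors each contribute ``new,'' topologically separated mass; a fusion along such nodes selects points assembling into a subspace $P \subseteq X$ homeomorphic to $\omega^\omega$, where the cosmic topology of $X$ together with the outer-network behaviour of the $F_s$ is used to keep $P$ closed in $X$ and to certify that the induced bijection $P \to \omega^\omega$ is a homeomorphism.

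I expect the fusion step---the construction of a genuinely \emph{closed} copy of $\mathbb{P}$ inside the possibly non-metrizable $X$---to be the main obstacle, since in the classical metric setting of Lemma~\ref{lem3} one leans on a complete metric, whereas here the separation and the absence of stray limit points must be extracted from the countable network of $X$ together with the continuity of $f$. It is tempting to bypass all of this by condensing the cosmic space $X$ onto a separable metrizable analytic space $Y$ and invoking Lemma~\ref{lem3} (equivalently Theorem~\ref{thm4}) directly; but $\sigma$-compactness does not transfer back across a condensation---for instance $\mathbb{Q}$ condenses onto a convergent sequence---so the preimages of the compact pieces of $Y$ need not be compact. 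This is exactly why the dichotomy must be carried out inside $X$ via the surjection $f$ rather than merely quoted from the metrizable case.
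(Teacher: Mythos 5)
Your proposal is correct and is essentially the approach behind the paper's own treatment: the paper does not prove this theorem but cites \cite{Arh86}, and, as it explains in Section 2, what Arhangel'ski\u{\i} actually proved is precisely your dichotomy---a non-$\sigma$-compact analytic space contains a closed copy of $\mathbb{P}$---which together with Lemma \ref{lem2} and the fact that closed subspaces of Menger spaces are Menger yields the theorem (the same pattern the paper uses again for the $K$-analytic case). The one caveat is that your sketch of the dichotomy itself, in particular the fusion step producing a genuinely closed copy of $\mathbb{P}$ in a non-metrizable $X$, stops short of a complete argument; but that step is exactly the content of the cited reference, and your diagnosis that a condensation onto a metrizable image cannot substitute for the internal dichotomy is also the right one.
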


\noindent \textbf{Warning}.  Arhangel'ski\u{\i} and some other authors call Menger spaces \emph{Hurewicz}. Our terminology is now standard.

\section {Some counterexamples}
\makebox[0pt]{}\vspace{-2ex}

Theorem \ref{thm1} does not obviously follow from Theorem \ref{lem7} since Menger \v{C}ech-complete spaces need not be analytic. For example, it is easy to see that a compact space of weight $\aleph_1$ is not  analytic.

In \cite{TsTa011} the first author incorrectly claimed that Arhangel'ski\u{\i} in \cite{Arh86} had proven that analytic spaces are perfect preimages of separable metrizable spaces. Arhangel'ski\u{\i}  did not claim this and it is not true. What Arhangel'ski\u{\i} did prove was that a non-$\sigma$-compact analytic space includes a closed copy of the space of irrationals, which suffices to prove the application in \cite{TsTa011} of Arhangel'ski{\v{\i}}'s supposed theorem. A counterexample to this false claim can be constructed by taking a closed discrete infinite subspace $F$ of the space of irrationals and collapsing it to a point. The resulting space is clearly analytic and hence has a countable network. However it does not have countable weight, because it is not first countable. Perfect pre-images of separable metrizable spaces are Lindel\"of $p$-spaces; in $p$-spaces, network weight $=$ weight, so this space is not a $p$-space.~~\qedsymbol

It is not clear what the right definition of ``projective" should be so as to attempt to generalize Theorem \ref{lem7}. A possible approach is to define the \emph{projective spaces} to be the result of closing the class of analytic spaces under remainders and continuous images, and hope that Theorem \ref{lem7} extends to this class, assuming PD. \\

Unfortunately we immediately run into trouble. It is not clear whether Menger remainders of analytic spaces are consistently $\sigma$-compact. See the next section for discussion. Continuous images of such Menger remainders need not be  in fact $\sigma$-compact {\textemdash} here is a counterexample:\\
\begin{ex}
 Okunev's space \cite{Arh000} is obtained by taking the Alexandrov duplicate $A$ of $\mathbb{P}$ (without loss of generality in $[0,1]$) and collapsing the non-discrete copy of $\mathbb{P}$ to a point. The remainder of $A$ in the duplicate of $[0,1]$ is analytic; to see this, note it is just the Alexandrov duplicate $D$  of the space of rationals in $[0,1]$. $D$ is a countable metrizable space; it may therefore be considered as an $F_\sigma$ subspace of $\mathbb{R}$, so is analytic. Thus  Okunev's space is the continuous image of a remainder of an analytic space. In \cite{BuTa012}, this space is shown to be Menger and not $\sigma$-compact.
 \end{ex}

There is a widely used generalization of analyticity to non-metrizable spaces {\textemdash} see  e.g. \cite{RoJa80}:

 \begin{defn}
A space is \textbf{$K$-analytic} if it is a continuous image of a Lindel\"of \v{C}ech-complete space.
\end{defn}
 Analytic spaces are $K$-analytic, and $K$-analytic metrizable spaces are analytic \cite{RoJa80}.

\begin{thm}
 K-analytic Menger spaces with closed sets $G_\delta$ are $\sigma$-compact.
\end{thm}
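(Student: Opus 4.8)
The plan is to reproduce, in this more general setting, the reduction that proved Theorem~\ref{thm1}: exhibit a perfect map from $X$ onto a separable metrizable space $M$, and then appeal to the metrizable results already in hand. Granting such a map $f\colon X\to M$, observe that $M$, being a continuous image of the $K$-analytic space $X$, is again $K$-analytic, hence (being metrizable) analytic; and $M$ is Menger as a continuous image of the Menger space $X$. By Theorem~\ref{lem7} the analytic Menger space $M$ is $\sigma$-compact, say $M=\bigcup_{n<\omega}C_n$ with each $C_n$ compact. Since $f$ is perfect, each $f^{-1}(C_n)$ is compact, and $X=\bigcup_{n<\omega}f^{-1}(C_n)$ is $\sigma$-compact. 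So everything rests on producing the perfect map, i.e.\ on showing that $X$ is a Lindel\"of $p$-space.

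To build the map I would first record the ambient structure. A $K$-analytic space is a Lindel\"of $\Sigma$-space, and since $X$ is Lindel\"of it is normal; together with the hypothesis that closed sets are $G_\delta$ this makes $X$ perfectly normal, so every closed set is a zero set and every open set a cozero set. The Lindel\"of $\Sigma$ structure supplies a countable family $\{F_n\}_{n<\omega}$ of closed sets that is a network modulo a cover of $X$ by compact sets: for each such compact $K$ and each open $U\supseteq K$ there is an $n$ with $K\subseteq F_n\subseteq U$. Using perfect normality I would expand each $F_n$ as a decreasing intersection of cozero sets and assemble these into a sequence $\{\mathcal U_k\}_{k<\omega}$ of open families, the candidate feathering witnessing that $X$ is a $p$-space in the sense of the definition above. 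Equivalently, one chooses continuous $h_n\colon X\to[0,1]$ with $F_n=h_n^{-1}(0)$ and considers the diagonal map $(h_n)_{n<\omega}\colon X\to[0,1]^\omega$, whose image is the candidate for $M$.

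The hard part is verifying that this candidate really witnesses the $p$-space property, namely that for every admissible choice the sets $\bigcap_{k<\omega}\overline{U_k}$ are compact and form an outer network, equivalently that the diagonal map is perfect. This cannot follow from $K$-analyticity and perfect normality alone: the space obtained by collapsing a closed discrete subset of $\mathbb{P}$ to a point, discussed in Section~2, is $K$-analytic and perfectly normal yet is not a $p$-space. It is exactly here that the Menger hypothesis must enter, and I would use it through a Hurewicz-type dichotomy: a $K$-analytic perfectly normal space is either $\sigma$-compact or contains a closed copy of $\mathbb{P}$. Since $\mathbb{P}$ is not Menger by Lemma~\ref{lem2}, and closed subspaces of Menger spaces are Menger, the second alternative is impossible for our $X$, leaving $\sigma$-compactness. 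Concretely, a closed copy of $\mathbb{P}$ is precisely the local obstruction that prevents the feathering from closing down to compact sets, so its absence is what upgrades the $\Sigma$-network to a genuine $p$-space feathering.

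The main obstacle, then, is establishing this dichotomy (or directly the $p$-space property) for $K$-analytic perfectly normal spaces. Any argument must be robust against genuinely non-metrizable behaviour: a perfectly normal non-metrizable compactum such as the split interval is $K$-analytic, perfectly normal and, being compact, trivially $\sigma$-compact, which shows that one cannot hope to prove that $X$ itself is metrizable, submetrizable, or of countable network weight --- only that it admits a perfect map onto a separable metrizable space. Accordingly, the Menger property should be deployed not to simplify $X$ internally but solely to exclude the one configuration, a closed copy of $\mathbb{P}$, whose presence would destroy the feathering.
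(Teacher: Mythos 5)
Your proposal has two halves, and the verdict is different for each. The bulk of it --- the attempt to produce a perfect map of $X$ onto a separable metrizable space by upgrading the Lindel\"of $\Sigma$-network to a $p$-space feathering --- is not merely incomplete but cannot work: the intermediate claim ``$X$ is a Lindel\"of $p$-space'' is false under the theorem's hypotheses. Collapse a closed discrete infinite subset of $\mathbb{Q}$ (say $\mathbb{Z}$) to a point. The quotient map is closed, so the result is a normal $T_1$ (hence Tychonoff) countable space; being countable it is $\sigma$-compact (hence Menger) and analytic (any countable space is a continuous image of $\mathbb{P}$), hence $K$-analytic; and in a countable $T_1$ space every subset is an $F_\sigma$, so closed sets are $G_\delta$. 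Yet it is not a $p$-space, by exactly the argument in Section 2 of the paper: it has a countable network but, not being first countable, does not have countable weight, whereas in $p$-spaces network weight equals weight. This space contains no copy of $\mathbb{P}$ at all, so your claim that a closed copy of $\mathbb{P}$ is ``precisely the local obstruction'' preventing the feathering from closing down to compact sets is false; Mengerness cannot rescue this route, and no proof of the theorem can pass through a perfect map onto a separable metrizable space.

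What does work is your closing fallback, and it is in fact the paper's entire proof --- except that you leave its key step unproven, flagging it as ``the main obstacle.'' That step is a known theorem: by 3.5.3 of Rogers--Jayne \cite{RoJa80}, a $K$-analytic space with closed sets $G_\delta$ which is not $\sigma$-compact includes a closed set that maps perfectly onto $\mathbb{P}$. Note this is weaker than the dichotomy you assert: it yields a closed set mapping perfectly onto $\mathbb{P}$, not a closed homeomorphic copy of $\mathbb{P}$, and you should not claim the stronger form, which is not what the literature provides in the non-metrizable setting. The weaker form suffices: a closed subspace of a Menger space is Menger, Mengerness is preserved by continuous maps, so $\mathbb{P}$ would be Menger, contradicting Lemma~\ref{lem2}. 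With that citation in hand, your last paragraph collapses to the paper's two-line proof, and the $p$-space machinery, Theorem~\ref{thm1}, and Theorem~\ref{lem7} are not needed at all.
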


\begin{proof}
By 3.5.3 of \cite{RoJa80}, if such a space were not $\sigma$-compact, it would include a closed
set which maps perfectly onto $\mathbb{P}$. But  $\mathbb{P}$  is not Menger.
\end{proof}

One might conjecture that Theorem  \ref{thm4} could be extended to K-analytic spaces, but unfortunately Arhangel'ski\u{\i}  \cite{Arh000}  proved Okunev's space is  K-analytic.

PD is equiconsistent with and follows from mild large cardinal assumptions, and is regarded by set theorists as relatively harmless.  This result leads us to ask whether it is consistent that there is a projective Menger set of reals which is not $\sigma$-compact.  The answer is:

\begin{thm}[~\cite{MiFr}] 
$V=L$ implies there is a Menger non-$\sigma$-compact set of reals which is projective.
\end{thm}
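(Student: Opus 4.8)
The plan is to take a ZFC transfinite-recursion construction of a Menger, non-$\sigma$-compact set of reals and render it definable by driving the recursion with the canonical well-ordering of $L$. I work in $\omega^\omega$, which is homeomorphic to $\mathbb{P}$, and exploit two standard translations. First, a $\sigma$-compact subset of $\omega^\omega$ is $\le^*$-bounded (each compact piece is bounded, and countably many bounds admit a common $\le^*$-upper bound), so any $\le^*$-unbounded set fails to be $\sigma$-compact. Second, by Hurewicz's combinatorial characterization of the Menger property, a separable metrizable $X$ is Menger iff no continuous image $f[X]\subseteq\omega^\omega$ is dominating (that is, $\le^*$-cofinal). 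Hence it suffices to produce $X\subseteq\omega^\omega$ that is $\le^*$-unbounded yet has no dominating continuous image, and to arrange that $X$ be projective. I note that the cheap route — take an unbounded set of size $<\mathfrak{d}$, which is automatically Menger because its continuous images are too small to dominate — is unavailable here: under $V=L$ one has $\mathfrak{b}=\mathfrak{d}=\aleph_1$, so every set of size $<\mathfrak{d}$ is countable and thus $\sigma$-compact. A genuine diagonalization of length $\omega_1$ is therefore required.

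For the recursion, fix the $<_L$-least enumerations $\{g_\alpha:\alpha<\omega_1\}$ of all of $\omega^\omega$ and $\{f_\alpha:\alpha<\omega_1\}$ of all continuous partial functions with $G_\delta$ domain into $\omega^\omega$ (every continuous map on a subset of $\omega^\omega$ extends to one of these, so handling them all suffices). I build points $x_\alpha$ together with ``escape'' functions $e_\alpha$, maintaining the invariant that $f_\beta(x_\gamma)\not\ge^* e_\beta$ whenever $\gamma\le\beta$ and $x_\gamma\in\operatorname{dom} f_\beta$. At stage $\alpha$ I first choose $e_\alpha$ to escape the countable set $\{f_\alpha(x_\gamma):\gamma<\alpha\}$ of previously produced images, and then choose $x_\alpha$ subject to two requirements: $x_\alpha\not\le^* g_\alpha$ (forcing $\le^*$-unboundedness of the final set, hence non-$\sigma$-compactness) and $f_\beta(x_\alpha)\not\ge^* e_\beta$ for every $\beta\le\alpha$ (preserving that each $f_\beta[X]$ omits a $\le^*$-witness, hence is not dominating, yielding Mengerness). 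The set $X=\{x_\alpha:\alpha<\omega_1\}$ is then $\le^*$-unbounded with no dominating continuous image, as desired.

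To obtain definability I simply make every choice canonical: at each stage take the $<_L$-least pair $(e_\alpha,x_\alpha)$ meeting the stated requirements. Under $V=L$ the well-ordering $<_L$ is $\Sigma^1_2$, the stage-$\alpha$ requirements are arithmetic in the countably many reals already produced and in the codes for $g_\alpha,f_\beta$, and Gödel's condensation lemma lets one express membership in the resulting set as a $\Sigma^1_2$ statement: $x\in X$ iff there is a countable transitive model of a sufficient fragment of $\mathrm{ZF}^-+V=L$ that correctly computes an initial segment of the recursion and places $x$ in its version of $X$ — correctness of such a model being arithmetic in its real code, and existence of the code supplying the single outer existential real quantifier. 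Thus $X$ is $\Sigma^1_2$, in particular projective. Since we are assuming $V=L$, the recursion takes place in the universe itself, so $X$ genuinely is Menger and not $\sigma$-compact; condensation is used only to bound the complexity, and we do not chase the optimal projective level here (the sharper co-analytic example is obtained with more work later).

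The main obstacle is not the definability overlay, which is routine once a $\Sigma^1_2$ well-ordering and condensation are in hand, but the feasibility of the stage-$\alpha$ choice of $x_\alpha$: one must find a single point that is $\le^*$-unbounded over $g_\alpha$ while simultaneously avoiding all of the countably many ``bad'' sets $B_\beta=\{x\in\operatorname{dom} f_\beta: f_\beta(x)\ge^* e_\beta\}$. The tension is that each $e_\beta$ must be chosen small enough to escape the earlier images yet large enough that $B_\beta$ stays topologically small (e.g.\ meager), so that the countable union $\bigcup_{\beta\le\alpha}B_\beta$ cannot swallow the unbounded set $\{x:x\not\le^* g_\alpha\}$. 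Engineering the $e_\beta$ to meet both demands — the combinatorial core of the Fremlin--Miller argument — is where the real care is needed; I would reproduce their bookkeeping (choosing each $e_\beta$ to be very large along a sparse infinite set while merely exceeding the finitely relevant earlier images elsewhere) to guarantee that a suitable $x_\alpha$ always exists.
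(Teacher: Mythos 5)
Your architecture (diagonalize along an $L$-canonical enumeration of all reals and of all continuous partial functions with $G_\delta$ domain, then get $\Sigma^1_2$-definability from the $\Sigma^1_2$ well-ordering plus condensation) is coherent, and the outer layers are fine: Kuratowski extension justifies restricting to $G_\delta$-domain maps, unboundedness kills $\sigma$-compactness, and the Hurewicz characterization applies to your $X\subseteq\omega^\omega$. But there is a genuine gap exactly at the step you defer, namely the existence of $x_\alpha$ at every stage, and as literally specified your recursion can die. The only constraint you place on $e_\beta$ is that it escape the earlier images, and that does not make the bad set $B_\beta=\{x\in\operatorname{dom}f_\beta:f_\beta(x)\ge^* e_\beta\}$ topologically small. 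Concretely, consider the coordinate maps $f^{(k)}(x)(m)=x(k)$; each occurs in your enumeration at some countable stage $\beta_k$. If at stage $\beta_k$ the earlier points happen to have $k$-th coordinates bounded by $N_k$, then the constant function $e_{\beta_k}\equiv N_k$ is admissible for you (it escapes every earlier image), and then $B_{\beta_k}\supseteq\{x:x(k)\ge N_k\}$ is a nonempty clopen set. Nothing you require prevents the $<_L$-least admissible choice from being of this kind; if it is so for every $k$, then beyond $\sup_k\beta_k$ all admissible points lie in the compact set $\prod_k[0,N_k]$, and once $g_\alpha$ dominates that set no $x_\alpha$ with $x_\alpha\not\le^* g_\alpha$ exists. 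So feasibility is not established, and the appeal to ``Fremlin--Miller's bookkeeping'' has no target: what \cite{MiFr} supplies, as used in this paper, is a co-analytic scale, not a diagonalization against continuous maps.

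The missing ingredient is a lemma you neither state nor prove: for every continuous $f$ on a $G_\delta$ set $G\subseteq\omega^\omega$ there is an $e$ such that $\{x\in G:f(x)\ge^* e\}$ is meager. This is true: each $x\mapsto f(x)(n)$ is continuous into discrete $\omega$, so the sets $\{x\in G:f(x)(n)\ge j\}$ are relatively clopen; putting $\mu_s(n)=\min\{f(x)(n):x\in[s]\cap G\}$ for basic clopen $[s]$ meeting $G$, any $e$ exceeding each $\mu_s$ infinitely often makes each set $\{x\in G:\forall n\ge m\ f(x)(n)\ge e(n)\}$ relatively nowhere dense, hence $B$ meager in $\omega^\omega$. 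Note also that your perceived ``tension'' is illusory: both demands on $e_\beta$ (escaping earlier images and shrinking $B_\beta$) are monotone in the same direction, since replacing $e_\beta$ by anything pointwise larger preserves both; so one simply diagonalizes countably many ``exceed infinitely often'' requirements. With this lemma built into the admissibility of $e_\beta$, each $B_\beta$ is meager, $\{x:x\not\le^* g_\alpha\}$ is comeager, your recursion survives, and the added requirements are still $\Sigma^1_1/\Pi^1_1$ in codes, hence absolute, so the condensation argument still yields $\Sigma^1_2$. Be aware, finally, that the paper's own route avoids all of this: under $V=L$ one takes the $<_L$-least scale $S$ (so $S$ is $\Sigma^1_2$) and quotes the Bartoszy\'nski--Tsaban theorem that $S\cup[\omega]^{<\omega}$ is Menger and not $\sigma$-compact; there Mengerness comes from a ZFC theorem about scales rather than from stage-by-stage surgery on continuous images.
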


As the set-theoretically knowledgeable reader might expect, one can simply use a $\mathbf{\Sigma}_{2}^1$ well-ordering of the reals to construct the desired example, being more careful in constructing the Menger non-$\sigma$-compact example in \cite{Tsa011}. But we will do better later.\hfill\qedsymbol 

In \cite{Tal013},   the first author  proved that if II has a winning strategy in the {\it{Menger game}} (defined in \cite{Tel84}), then the space is projectively $\sigma$-compact, and asked whether the converse is true.  It isn't:

\begin{ex}
Moore's $L$-space \cite{Mo006} is projectively countable \cite{Tal013} but II does not have a winning strategy in the Menger game.
\end{ex}

\begin{proof}
By \cite{Tel84}, for hereditarily Lindel\"of spaces, II winning the Menger game is equivalent to $\sigma$-compactness.  Moore's space is not $\sigma$-compact, else it would be productively Lindel\"of, which it isn't \cite{TsZd012}.
\end{proof}

\section {Co-analytic Spaces}
\begin{defn}
For a space $X$ and its compactification $bX$ the complement $bX \setminus X$ is called a \textbf{remainder} of $X$.
\end{defn}

\begin{defn}
A space is \textbf{co-analytic} if its Stone-\v{C}ech remainder  $\beta X\setminus X$ is analytic.                            
\end{defn}
In this section, we investigate whether co-analytic Menger spaces are $\sigma$-compact.
\begin{lem}\label{any}
Any remainder of a co-analytic space is analytic.
\end{lem}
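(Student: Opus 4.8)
The plan is to exploit the fact that the Stone-\v{C}ech compactification dominates every other compactification, and that the class of analytic spaces (continuous images of $\mathbb{P}$) is trivially closed under continuous images. Let $bX$ be an arbitrary compactification of $X$, and let $\pi\colon\beta X\to bX$ be the canonical continuous surjection, i.e.\ the unique continuous extension of the inclusion $X\hookrightarrow bX$; it satisfies $\pi\restriction X=\mathrm{id}_X$. I would show that $\pi$ carries the Stone-\v{C}ech remainder onto the given remainder, that is $\pi(\beta X\setminus X)=bX\setminus X$. Granting this, $bX\setminus X$ is a continuous image of $\beta X\setminus X$; since $\beta X\setminus X$ is analytic by hypothesis, fix a continuous surjection $f\colon\mathbb{P}\to\beta X\setminus X$, and then $(\pi\restriction(\beta X\setminus X))\circ f\colon\mathbb{P}\to bX\setminus X$ is a continuous surjection, so $bX\setminus X$ is analytic.

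The main work, and the step I expect to be the real obstacle, is proving that $\pi$ maps the remainder exactly onto the remainder, which reduces to the claim $\pi^{-1}(X)=X$. One inclusion is immediate: $\pi\restriction X=\mathrm{id}_X$ gives $X\subseteq\pi^{-1}(X)$. For the other, I would argue with nets (general compact Hausdorff spaces need not be first countable, so sequences will not suffice). Suppose $z\in\pi^{-1}(x)$ with $x\in X$. Since $X$ is dense in $\beta X$, pick a net $(x_\alpha)$ in $X$ with $x_\alpha\to z$ in $\beta X$. Continuity of $\pi$ yields $x_\alpha=\pi(x_\alpha)\to\pi(z)=x$ in $bX$. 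Because $X$ carries the subspace topology inherited from $bX$ and $x\in X$, convergence of the net $(x_\alpha)$ to $x$ in $bX$ forces $x_\alpha\to x$ in $X$; and because the inclusion $X\hookrightarrow\beta X$ is continuous, this gives $x_\alpha\to x$ in $\beta X$ as well. As $\beta X$ is Hausdorff, limits of nets are unique, so $z=x\in X$. Hence $\pi^{-1}(X)\subseteq X$, and therefore $\pi^{-1}(X)=X$.

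It then follows formally that $\pi^{-1}(bX\setminus X)=\beta X\setminus\pi^{-1}(X)=\beta X\setminus X$, and surjectivity of $\pi$ gives $\pi(\beta X\setminus X)=\pi\bigl(\pi^{-1}(bX\setminus X)\bigr)=bX\setminus X$, completing the key step. The only degenerate case to note is when $X$ is compact, in which case both remainders are empty and there is nothing to prove. I do not anticipate difficulty in the final reduction, since closure of analyticity under continuous images is purely a matter of composing surjections onto $\mathbb{P}$; the delicate point throughout is the behaviour of the fibres of $\pi$ over points of $X$, which the net argument above is designed to control.
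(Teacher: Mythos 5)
Your proof is correct and takes essentially the same route as the paper: both push the analytic Stone--\v{C}ech remainder onto the arbitrary remainder via the canonical map $\pi\colon\beta X\to bX$ fixing $X$ pointwise, and then invoke closure of analyticity under continuous images. The only difference is that the paper simply cites \v{C}ech's classical theorem for the identity $\pi(\beta X\setminus X)=bX\setminus X$, whereas you prove that fact directly with a (correct) net argument establishing $\pi^{-1}(X)=X$.
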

\begin{proof}
Let $X$ be  a co-analytic space. Then the remainder $\beta X \setminus X$ of $X$ is analytic. Any compactification of  a space $X$ is the  image of   $\beta X$ under a unique continuous mapping f that keeps the space  $X$ pointwise fixed; furthermore   $f( \beta X \setminus X)=bX \setminus X$  \cite{Cee}. This implies that $X$ is co-analytic in $bX$. Thus any remainder  of $X$ is analytic.
\end{proof}

\begin{thm}
If there is a measurable cardinal, then co-analytic Menger sets of reals are $\sigma$-compact.
\end{thm}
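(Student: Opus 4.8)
The plan is to combine the characterization of co-analytic sets of reals with the large-cardinal hypothesis to push Theorem~\ref{thm4} one level up the projective hierarchy. First I would observe that for a subset $X$ of $\mathbb{R}$, being co-analytic in the topological sense of the paper is intimately tied to the descriptive-set-theoretic notion: the Stone--\v{C}ech remainder $\beta X \setminus X$ is analytic exactly when $X$ sits as a (boldface) $\mathbf{\Pi}^1_1$ subset of its closure, or more precisely when $X$ is the complement within some Polish space of an analytic set. By Lemma~\ref{any}, any remainder of $X$ is analytic, and since $X$ is separable metrizable we may take a metrizable compactification $bX$; the remainder $bX \setminus X$ is then an analytic, hence $\mathbf{\Sigma}^1_1$, subset of the Polish space $bX$, so $X$ itself is $\mathbf{\Pi}^1_1$ in $bX$. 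Thus a co-analytic set of reals in this paper's sense is genuinely a co-analytic (i.e. $\mathbf{\Pi}^1_1$) set of reals in the classical sense.

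Next I would invoke the measurable cardinal. The relevant classical theorem is that the existence of a measurable cardinal implies $\mathbf{\Pi}^1_1$-determinacy (this is Martin's theorem: a measurable cardinal yields analytic determinacy, equivalently $\mathbf{\Sigma}^1_1$- and hence $\mathbf{\Pi}^1_1$-determinacy). With $\mathbf{\Pi}^1_1$-determinacy in hand, the Hurewicz dichotomy of Lemma~\ref{lem3} lifts from analytic sets to co-analytic sets: for a $\mathbf{\Pi}^1_1$ set $Y \subseteq \mathbb{R}$, either $Y$ is $\sigma$-compact or $Y$ contains a closed copy of the irrationals $\mathbb{P}$. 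This is precisely the kind of regularity consequence of determinacy that the paper has already flagged for the projective hierarchy in the discussion preceding Theorem~\ref{thm5}; here we only need it at the level of $\mathbf{\Pi}^1_1$, for which a single measurable cardinal suffices rather than full PD.

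The proof then mirrors the deduction of Theorem~\ref{thm4} from Lemmas~\ref{lem2} and~\ref{lem3}. Suppose $X$ is a co-analytic Menger set of reals that is not $\sigma$-compact. By the first step, $X$ is $\mathbf{\Pi}^1_1$; by the second step, the Hurewicz dichotomy applies, so $X$ must contain a closed copy of $\mathbb{P}$. But a closed subspace of a Menger space is Menger, so $\mathbb{P}$ would be Menger, contradicting Lemma~\ref{lem2}. Hence $X$ is $\sigma$-compact.

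The main obstacle I anticipate is not the final deduction, which is formally identical to the analytic case, but the careful bookkeeping in the first step: verifying that the paper's topological notion of co-analyticity (remainder analytic) really coincides with $\mathbf{\Pi}^1_1$-ness for sets of reals, and confirming that the correct large-cardinal input is the implication from a measurable cardinal to $\mathbf{\Pi}^1_1$-determinacy (via analytic determinacy) and thence to the Hurewicz dichotomy for $\mathbf{\Pi}^1_1$ sets. One must also be slightly cautious that the closed copy of $\mathbb{P}$ produced by the dichotomy is closed in $X$ itself, so that the Menger property is inherited; this is exactly what the ``closed copy'' formulation in Lemma~\ref{lem3} guarantees, so no extra work should be needed there.
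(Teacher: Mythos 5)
Your proposal is correct and takes essentially the same route as the paper's proof: a measurable cardinal yields $\mathbf{\Pi}^1_1$-determinacy (Martin's theorem, cited by the paper as Kanamori 31.1), which gives the co-analytic instance of the determinacy-based Hurewicz dichotomy underlying Theorem \ref{thm5}, and then Mengerness plus a closed copy of $\mathbb{P}$ contradicts Lemma \ref{lem2}. The paper states this in one line (also crediting the observation to Miller--Fremlin), while you additionally spell out the step, left implicit in the paper, that the topological notion of co-analyticity (analytic Stone--\v{C}ech remainder) implies classical $\mathbf{\Pi}^1_1$-ness for sets of reals via Lemma \ref{any}.
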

\begin{proof}
The hypothesis implies co-analytic (also known as  $\prod ^1_1$-) determinacy  (\cite{Kan94}, 31.1) and hence the co-analytic case of  Theorem \ref{thm5}. Indeed, that co-analytic determinacy implies co-analytic Menger sets of reals are $\sigma$-compact was noticed earlier in \cite{MiFr}.\end{proof}

Theorem \ref{lem7} did not require analytic determinacy, so it is not obvious that something beyond ZFC is needed in order to conclude that Menger co-analytic sets of reals are $\sigma$-compact. But it is.

We  have:
\begin{thm}
Suppose $\omega_{1}=\omega_{1}^L$ and $ \mathfrak d > \aleph_{1}$. Then there is a co-analytic Menger set of reals which is not $\sigma$-compact.
\end{thm}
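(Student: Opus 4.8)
The plan is to produce a single co-analytic set $X \subseteq \mathbb{R}$ and then verify Mengerness and non-$\sigma$-compactness from two independent inputs: $\mathfrak{d} > \aleph_1$ handles the former, while $\omega_1 = \omega_1^L$ handles both co-analyticity and the failure of $\sigma$-compactness. The key object is a thin co-analytic set. Recall that $\omega_1 = \omega_1^L$ is exactly the hypothesis under which there exists an uncountable $\Pi^1_1$ set with no perfect subset; indeed, by the classical theory of the largest thin $\Pi^1_1$ set $C_1$, such a set is uncountable precisely when $\omega_1^L = \omega_1$, and being thin and $\Pi^1_1$ it then has cardinality exactly $\aleph_1$. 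First I would fix such a set $C \subseteq \omega^\omega$ and transport it by a homeomorphism of $\omega^\omega$ onto the irrationals $\mathbb{P} \subseteq \mathbb{R}$ to get $X \subseteq \mathbb{R}$. Since $\mathbb{P}$ is a $G_\delta$ in $\mathbb{R}$ and the transporting map is a homeomorphism that is Borel as a map into $\mathbb{R}$, the image $X$ remains co-analytic, has size $\aleph_1$, and still contains no perfect set.

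For Mengerness I would appeal to the fact that every set of reals of cardinality strictly less than $\mathfrak{d}$ is Menger. This is immediate from Hurewicz's combinatorial characterization, namely that a set of reals is Menger iff none of its continuous images in $\omega^\omega$ is dominating: any continuous image of $X$ has cardinality at most $|X| = \aleph_1 < \mathfrak{d}$ and hence cannot be dominating. Thus $X$ is Menger. As a consistency check, since $X$ is Menger it can contain no closed copy of $\mathbb{P}$, because $\mathbb{P}$ is not Menger by Lemma \ref{lem2} and Mengerness passes to closed subsets; and indeed $X$ has no perfect subset at all.

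For the failure of $\sigma$-compactness I would use that an uncountable $\sigma$-compact metrizable space contains a nonempty perfect compact set, hence a copy of the Cantor set. As $X$ is uncountable but perfect-set-free, $X$ cannot be $\sigma$-compact. Alternatively, $\mathfrak{d} > \aleph_1$ forces $\mathfrak{c} > \aleph_1$, so the uncountable set $X$ of size $\aleph_1 < \mathfrak{c}$ is not Borel by the perfect set property for Borel sets, a fortiori not $F_\sigma$, and hence not $\sigma$-compact.

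The heart of the argument, and the step I expect to be the main obstacle, is the first one: the existence under $\omega_1 = \omega_1^L$ of an uncountable co-analytic set with no perfect subset. Once this descriptive-set-theoretic input is in hand the rest is soft, Mengerness being a mere cardinality count against $\mathfrak{d}$ and non-$\sigma$-compactness following from perfect-set-freeness. The one technical point to check carefully is that co-analyticity is preserved in passing from Baire space to $\mathbb{R}$ while the topological Menger property is carried along by an honest homeomorphism rather than merely a Borel isomorphism.
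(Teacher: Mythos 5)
Your proposal is correct and follows essentially the same route as the paper's proof: extract a co-analytic set of reals of size $\aleph_1$ from $\omega_1 = \omega_1^L$ (the paper cites Kanamori 13.12, you invoke the largest thin $\Pi^1_1$ set), get Mengerness from cardinality $< \mathfrak{d}$ via Hurewicz, and rule out $\sigma$-compactness by cardinality/perfect-set considerations. Your primary non-$\sigma$-compactness argument via perfect-set-freeness is a slight variant (and has the nice feature of not needing $\mathfrak{d} > \aleph_1$ for that step), while your fallback argument---an uncountable $\sigma$-compact set of reals has size $2^{\aleph_0} > \aleph_1$---is exactly the paper's.
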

\begin{proof}
$\omega_{1}=\omega_{1}^L$ implies there is a co-analytic set of reals of size $\aleph_{1}$ (\cite{Kan94}, 13.12). By \cite{Hur27},  any set of reals of size $< \mathfrak d$ is Menger. Every 
$\sigma$-compact set of reals, has cardinality either countable or $2^{\aleph_{0}}$, but $ \mathfrak d > \aleph_{1}$ implies $2^{\aleph_{0} }> \aleph_{1}$, so $X$ cannot be 
$\sigma$-compact.
\end{proof}
It is easy to find a model of set theory satisfying these two hypotheses. For example, start with L and force Martin's Axiom plus $2^{\aleph_{0}}= \aleph_{2}$ by a countable chain condition iteration.

A question we have been unable to answer is whether it is consistent  that, modulo large cardinals, co-analytic Menger spaces are $\sigma$-compact. One would expect to reduce the problem to sets of reals and apply co-analytic determinacy, but we have so far been unsuccessful. We however have some partial answers.

\begin{lem}\label{lemco}
The perfect image of a co-analytic space is co-analytic.
\end{lem}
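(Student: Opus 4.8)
The plan is to work directly with the Stone-\v{C}ech extension of the perfect map. Let $f\colon X\to Y$ be a perfect surjection with $X$ co-analytic, and let $\beta f\colon\beta X\to\beta Y$ denote the continuous extension of $f$ (here $Y$ is completely regular by our standing assumption, so $\beta Y$ is available). Since $X$ is co-analytic, $\beta X\setminus X$ is analytic, i.e.\ a continuous image of $\mathbb{P}$. The goal is to realize $\beta Y\setminus Y$ as a continuous image of $\beta X\setminus X$, for then $\beta Y\setminus Y$ is analytic as well (a continuous image of a continuous image of $\mathbb{P}$ is again a continuous image of $\mathbb{P}$), and hence $Y$ is co-analytic by definition.

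The crucial ingredient is the behaviour of $\beta f$ on the remainders. I would invoke the standard characterization of perfectness at the level of Stone-\v{C}ech compactifications: $f$ is perfect exactly when $(\beta f)^{-1}(Y)=X$, equivalently $\beta f(\beta X\setminus X)\subseteq\beta Y\setminus Y$ (this is the perfect-map refinement of the extension fact underlying Lemma \ref{any}; see \cite{Cee} or Engelking's \emph{General Topology}). Granting this, $\beta f$ carries $\beta X\setminus X$ into $\beta Y\setminus Y$. I would then check that the restriction $\beta f|_{\beta X\setminus X}$ is in fact onto $\beta Y\setminus Y$: since $f$ is surjective and $X$ is dense in $\beta X$, the image $\beta f(\beta X)$ is compact and contains the dense set $Y$, so $\beta f$ is onto $\beta Y$; and given $y\in\beta Y\setminus Y$, any $x\in\beta X$ with $\beta f(x)=y$ must lie in $\beta X\setminus X$, for otherwise $y=f(x)\in Y$.

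Combining these observations, $\beta Y\setminus Y$ is a continuous image of the analytic space $\beta X\setminus X$, hence analytic, so $Y$ is co-analytic, completing the argument.

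The main obstacle --- indeed the only nonroutine input --- is the remainder behaviour of $\beta f$, namely that a perfect map extends to one whose Stone-\v{C}ech extension sends remainder onto remainder. If one wishes to avoid quoting this $\beta$-level statement, an alternative is to lean on Lemma \ref{any}: because \emph{any} remainder of $X$ is analytic, one is free to replace $\beta X$ by a convenient compactification $bX$ to which $f$ extends as $bf\colon bX\to bY$ with $bf(bX\setminus X)=bY\setminus Y$, and then run the identical image argument with $bX\setminus X$ in place of $\beta X\setminus X$.
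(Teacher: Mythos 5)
Your main argument is correct and is essentially the paper's own proof: the paper likewise extends $f$ to a continuous $F\colon \beta X\to\beta Y$ and cites Henriksen--Isbell \cite[Lemma 1.5]{HenIsb57} for precisely the fact you isolate, namely that $F(\beta X\setminus X)=\beta Y\setminus Y$ when $f$ is perfect, whence $\beta Y\setminus Y$ is a continuous image of the analytic $\beta X\setminus X$. Your extra step deriving ontoness of $\beta f$ on the remainder from the inclusion $\beta f(\beta X\setminus X)\subseteq\beta Y\setminus Y$ together with surjectivity of $f$ is a detail the paper delegates to the citation, and it is fine.

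One caution about your closing alternative: co-analyticity is defined via the Stone--\v{C}ech remainder, and Lemma \ref{any} transfers analyticity only downward, from $\beta Y\setminus Y$ to the remainder in any smaller compactification $bY$ (because $\beta Y$ maps onto $bY$ fixing $Y$, carrying remainder onto remainder); it does not go back up. So proving that some convenient compactification $bY$ has analytic remainder would not by itself show $Y$ co-analytic: for instance, the one-point compactification of the countable discrete space $\omega$ has a one-point (hence analytic) remainder, yet $\beta\omega\setminus\omega$ is not analytic, as it has no countable network. Thus that sketched variant needs an additional argument (or needs $bY$ to be $\beta Y$ itself), whereas your main proof needs nothing more.
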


\begin{proof}
Let $Y$ be a perfect image of a co-analytic space $X$ under  a perfect map $f$. Then $f$ has a unique continuous  extension $F:\beta X\rightarrow \beta Y$ such that $F(\beta X\backslash  X)=\beta Y\backslash  Y$ \cite[Lemma 1.5]{HenIsb57} .
Since the continuous image of an analytic space is analytic, $Y$ is co-analytic .
\end{proof}

\begin{thm}\label{p-an}
Co-analytic determinacy implies that if $X$ is the  Menger remainder of an analytic p-space, then $X$ is $\sigma$-compact.
\end{thm}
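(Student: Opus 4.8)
The plan is to reduce the statement about spaces to the already-proved statement about sets of reals, namely Theorem~\ref{thm5} in its co-analytic instance, and exploit the rigidity of p-spaces. Let $X$ be the Menger remainder of an analytic p-space $Z$; that is, $X = bZ \setminus Z$ for some compactification $bZ$. The first move is to understand the topological structure of $X$. Since $Z$ is a p-space, I would hope that $X$, sitting as a remainder, inherits enough structure to be reduced to a metrizable object. The natural route is to produce a perfect map from $X$ (or from a closed subspace capturing its non-$\sigma$-compactness) onto a separable metrizable space, so that Mengerness and the relevant definability transfer downward, and $\sigma$-compactness can be pulled back up by the fact that perfect preimages of $\sigma$-compact spaces are $\sigma$-compact.

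The key technical point I expect to use is that remainders of p-spaces are well-behaved: analytic p-spaces are, up to the perfect-preimage relation, metrizable objects, so their remainders should be $K$-analytic and in fact co-analytic in a controlled way. Concretely, I would first argue that because $Z$ is an analytic p-space, $Z$ is a perfect preimage of a separable metrizable analytic space, and a compactification $bZ$ can be chosen so that the remainder $X = bZ \setminus Z$ maps perfectly onto the corresponding metrizable remainder $M = bY \setminus Y$, where $Y$ is the metrizable image. Since perfect maps preserve Mengerness in both directions on the relevant pieces and preserve $\sigma$-compactness of preimages, proving $M$ is $\sigma$-compact would suffice. The object $M$ is a remainder of an analytic metrizable space, hence a separable metrizable space whose definability I must pin down: I would show $M$ is co-analytic as a set of reals, using Lemma~\ref{any} together with the fact that the image $Y$ is analytic metrizable, so that $M = bY \setminus Y$ is co-analytic in the classical sense.

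With $M$ identified as a co-analytic Menger set of reals, co-analytic determinacy applies directly via the co-analytic case of Theorem~\ref{thm5}: $M$ is $\sigma$-compact. Pulling back along the perfect map, $X$ is $\sigma$-compact, completing the argument. The logical skeleton is therefore: (i) present $Z$ as a perfect preimage of a separable metrizable analytic $Y$; (ii) lift this to a perfect map between remainders $X \to M$, using the Henriksen--Isbell type extension of perfect maps to Stone--\v{C}ech compactifications as in Lemma~\ref{lemco}; (iii) identify $M$ as a co-analytic set of reals; (iv) invoke co-analytic determinacy and Theorem~\ref{thm5} to conclude $M$ is $\sigma$-compact; (v) conclude $X$ is $\sigma$-compact since it is a perfect preimage of $M$.

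The main obstacle will be step (ii): ensuring that the perfect map $Z \to Y$ genuinely induces a perfect map on remainders with the right image, and that the remainder $M$ of the metrizable space really is separable metrizable and co-analytic rather than merely $K$-analytic. The extension of a perfect map to the Stone--\v{C}ech compactifications carries remainders to remainders (this is precisely the content invoked in Lemma~\ref{lemco}), but I must verify that the induced map on the remainder is itself perfect and that Mengerness of $X$ transfers to $M$; since continuous images of Menger spaces are Menger, $M$ is Menger once it is a continuous image of $X$, so the delicate part is perfectness and the metrizability/co-analyticity of $M$. Here the p-space hypothesis on $Z$ is doing the essential work, since it is what forces the metrizable target and keeps the remainder inside the realm where co-analytic determinacy, rather than the unavailable analytic determinacy, is the relevant assumption.
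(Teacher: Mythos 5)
Your overall skeleton (produce a perfect map from $X$ onto a co-analytic separable metrizable Menger space, invoke the co-analytic case of Theorem~\ref{thm5}, then pull $\sigma$-compactness back through the perfect map) is the same as the paper's, but the step where you produce that perfect map --- precisely the step you flag as ``the main obstacle,'' your step (ii) --- has a genuine gap. The hypothesis hands you a \emph{fixed} compactification $bZ$ with $X = bZ\setminus Z$ Menger; you cannot ``choose'' $bZ$. Mengerness is preserved by continuous images but not by perfect preimages, and different remainders of $Z$ are related only by maps emanating from the Stone--\v{C}ech remainder: the natural map $\beta Z\to bZ$ restricts to a perfect surjection of $\beta Z\setminus Z$ \emph{onto} $X$, so Mengerness of $X$ tells you nothing about the remainder in any larger compactification. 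For the same reason the Henriksen--Isbell lifting runs the wrong way for your purposes: extending the perfect map $f\colon Z\to Y$ gives a perfect surjection $\beta Z\setminus Z\to\beta Y\setminus Y$, i.e.\ a map \emph{out of the Stone--\v{C}ech remainder}, never a map out of $X$; for the given $bZ$ nothing guarantees that $f$ extends over $bZ$ into any compactification of $Y$, let alone a metrizable one. Even in the special case $bZ=\beta Z$, the target $\beta Y\setminus Y$ is not a set of reals (it contains copies of $\beta\omega\setminus\omega$, so it is neither metrizable nor analytic), and you would still have to compose with the natural map onto the remainder of $Y$ in a metrizable compactification. So as written your argument proves the theorem only when $X$ is the Stone--\v{C}ech remainder, not an arbitrary remainder.

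The missing idea, which is the first line of the paper's proof, is Arhangel'ski\u{\i}'s theorem \cite{Arh005} that \emph{any} remainder of a Lindel\"of p-space is again a Lindel\"of p-space. Since $Z$ is analytic it is Lindel\"of, so this theorem applies to $X$ itself: $X$ is a Lindel\"of p-space and therefore maps perfectly onto a separable metrizable space $Y'$ by its own structure --- no lifting from $Z$'s side and no control over $bZ$ is needed. From there the paper proceeds essentially as in your steps (iii)--(v): $X$ is co-analytic, so by Lemma~\ref{lemco} its perfect image $Y'$ is a co-analytic set of reals; $Y'$ is Menger as a continuous image of $X$; co-analytic determinacy makes $Y'$ $\sigma$-compact; and $X$ is $\sigma$-compact as a perfect preimage of $Y'$. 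Those closing steps of yours are fine; what cannot be done your way is the production of the perfect map in (ii), and repairing it requires the cited remainder theorem rather than the Henriksen--Isbell extension.
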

\begin{proof}
It is known that any remainder of  a Lindel\"of p-space is a Lindel\"of p-space \cite{Arh005}. As in the proof of Theorem \ref{thm1}, it suffices to prove that $X$ maps perfectly onto a separable metrizable space $Y$. Since $X$ is co-analytic, by using Lemma \ref{lemco} we obtain that $Y$ is co-analytic. But then $Y$ is $\sigma$-compact, so  $X$ is $\sigma$-compact.\hfill\qedsymbol
\end{proof}

\begin{defn} 
$X$ is  { \textbf {projectively \v{C}ech-complete}} if every separable metrizable image of $X$ is \v{C}ech-complete.
\end{defn}
\begin{thm}\label{3.9}
 Every co-analytic projectively \v{C}ech-complete space is \v{C}ech-complete.
 \end{thm}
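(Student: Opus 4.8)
The plan is to reduce \v{C}ech-completeness to $\sigma$-compactness of the Stone--\v{C}ech remainder and then exploit analyticity together with projective \v{C}ech-completeness. Recall (Henriksen--Isbell \cite{HenIsb57}) that a completely regular space $X$ is \v{C}ech-complete exactly when it is a $G_\delta$ in $\beta X$, i.e.\ exactly when the remainder $R:=\beta X\setminus X$ is $\sigma$-compact (an $F_\sigma$ of compacta in the compact space $\beta X$). By the definition of co-analyticity (cf.\ Lemma \ref{any}), $R$ is analytic; in particular $R$ is Lindel\"of and has a countable network. Thus the whole problem becomes: show that this analytic remainder $R$ is $\sigma$-compact.

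First I would manufacture a separable metrizable image of $X$ that ``sees'' $R$. Since $R$ has a countable network, so does $R\times R$, whence $R\times R$ is hereditarily Lindel\"of and the open set $\{(p,q):p\neq q\}$ is Lindel\"of. As $\beta X$ is completely regular, the sets $\{(p,q):f(p)\neq f(q)\}$, for $f\in C(\beta X,[0,1])$, cover it, so countably many functions $f_k\in C(\beta X,[0,1])$ already separate the points of $R$. Assembling them gives a continuous map $g=(f_k)_k:\beta X\to[0,1]^\omega$ onto a metric compactum $Z=g(\beta X)$ with $g|_R$ injective. Because $X$ is dense, $A:=g(X)$ is dense in $Z$ and is a separable metrizable continuous image of $X$; projective \v{C}ech-completeness forces $A$ to be \v{C}ech-complete, hence a dense $G_\delta$ in $Z$. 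Consequently $W:=g^{-1}(A)$ is a $G_\delta$ of $\beta X$ and so is \v{C}ech-complete; $g|_W:W\to A$ is a perfect map onto a separable metrizable (indeed Polish) space, so $W$ is a Lindel\"of $p$-space; and $\beta X\setminus W=g^{-1}(Z\setminus A)$ is a $\sigma$-compact subset of $R$. Since $R=(\beta X\setminus W)\cup(R\cap W)$, it now suffices to prove that $R\cap W=W\setminus X$ is $\sigma$-compact.

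To finish I would push the Hurewicz dichotomy (Lemma \ref{lem3}, in its Polish-space form) down to the base $A$. The set $W\setminus X$, being a $G_\delta$ subspace of the analytic space $R$, is itself analytic, and $g|_{W\setminus X}$ is a continuous injection of it into the Polish space $A$; hence $g(W\setminus X)$ is an analytic subset of $A$ and is either $\sigma$-compact or contains a closed copy of the irrationals $\mathbb{P}$. In the first case the analysis should close up: the $\sigma$-compact structure in $A$ pulls back through the injection $g|_R$ to exhibit $W\setminus X$, and therefore $R$, as $\sigma$-compact, so that $X$ is \v{C}ech-complete. The second case is the one to rule out, by promoting the closed copy of $\mathbb{P}$ lying in the remainder over $A$, through the perfect map $g|_W:W\to A$, to a separable metrizable continuous image of $X$ that is not \v{C}ech-complete, contradicting the standing hypothesis on $X$.

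The main obstacle is precisely this last transfer. The map $g$ separates the points of $R$ but need not separate $R$ from the dense set $X$, so $g|_{W\setminus X}$ is only a continuous bijection onto its image and is \emph{not} perfect; consequently $\sigma$-compactness and its failure do not pass across it for free. Equivalently, the crux is to control the ``collision set'' $W\setminus X$ of remainder points that $g$ fails to separate from $X$, and to show that a closed copy of $\mathbb{P}$ there genuinely produces a non-\v{C}ech-complete metrizable image of $X$ rather than merely of its remainder. This is where the descriptive-set-theoretic hypotheses, analyticity of $R$ together with projective \v{C}ech-completeness of $X$, must be combined, and it is the step I expect to require the most care.
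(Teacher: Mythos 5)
Your reduction is correct as far as it goes: $X$ is \v{C}ech-complete iff $R=\beta X\setminus X$ is $\sigma$-compact; countably many members of $C(\beta X,[0,1])$ do separate the points of $R$, since the off-diagonal of $R\times R$ is Lindel\"of; and with $g$, $Z$, $A=g(X)$, $W=g^{-1}(A)$ as you define them, projective \v{C}ech-completeness makes $A$ Polish, makes $\beta X\setminus W$ a $\sigma$-compact subset of $R$, and leaves exactly $W\setminus X=R\cap W$ to handle. The genuine gap is the one you flag yourself: the Hurewicz dichotomy applied to $g(W\setminus X)$ inside $A$ cannot close either branch. If $g(W\setminus X)$ is $\sigma$-compact, you only learn that $W\setminus X$ is \emph{contained in} the $\sigma$-compact set $(g|_W)^{-1}(g(W\setminus X))$; since $W\setminus X$ is not closed in $W$ ($X$ is dense there) and its Borel position in $W$ is precisely what is in question, $\sigma$-compactness does not pull back through the non-perfect injection $g|_{W\setminus X}$. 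If instead $g(W\setminus X)$ contains a closed copy of $\mathbb{P}$, there is no contradiction to be extracted: note that $g(W\setminus X)\subseteq g(W)=A$, so this copy of $\mathbb{P}$ lies \emph{inside} the Polish space $A$ and is closed only in $g(W\setminus X)$, not in $A$; a \v{C}ech-complete $A$ can perfectly well contain non-$\sigma$-compact analytic subsets, so no non-\v{C}ech-complete metrizable image of $X$ is produced. Thus the proposal is a valid reduction followed by a missing step, not a proof.

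The paper's proof avoids this by applying the dichotomy \emph{upstream}, to the remainder $Y=\beta X\setminus X$ itself --- legitimate because Arhangel'ski\u{\i}'s theorem (the non-metrizable version of Lemma \ref{lem3} behind Theorem \ref{lem7}) applies to analytic \emph{spaces}, not only to analytic subsets of Polish spaces. If $Y$ is not $\sigma$-compact it contains a closed copy $F$ of $\mathbb{P}$, and the idea your argument lacks is the Okunev-style marker construction: $Y$ has a countable network, hence is perfectly normal, so a homeomorphism of $F$ onto $\mathbb{P}$ extends by Tietze to a continuous $f\colon Y\to[0,1]$, and one can choose $g\colon Y\to[0,1]$ with $F=g^{-1}(\{0\})$; then for $h=(f,g)$ the set $h(F)=h(Y)\cap([0,1]\times\{0\})$ is closed in $h(Y)$ and still homeomorphic to $\mathbb{P}$ --- the second coordinate prevents points of $Y\setminus F$ from colliding with the image of $F$, which is exactly the collision phenomenon that stalls your argument. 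The paper then extends $h$ over the compactification and concludes that the resulting separable metrizable image of $X$ fails to be \v{C}ech-complete, contradicting projective \v{C}ech-completeness. (Even in the paper that final step is brisk --- it does not explicitly rule out points of $X$ itself landing on the copy of $\mathbb{P}$, the analogue of your ``collision set'' --- but the structural lesson stands: the dichotomy must be applied to $Y$ itself and the copy of $\mathbb{P}$ protected by a zero-set marker \emph{before} passing to a metrizable image, rather than applied afterwards inside $A$.)
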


\begin{proof}
Let X be co-analytic and projectively \v{C}ech-complete. Let $Y = \beta X\setminus X$. If Y is $\sigma$-compact, we are done. If not, then by [2],  $Y$  includes a closed $F$ homeomorphic to $\mathbb{P}$. Following the proof of Theorem 1.5. in \cite{Ok93}, we can extend a homeomorphism mapping $F$ onto  $\mathbb{P}$ to a continuous  $f: Y\rightarrow [0,1]$. We can choose a continuous $g: Y\rightarrow [0,1]$
such that $F=g^{-1}(\{ 0 \})$ and define $h:Y \rightarrow [0,1]^2$ by $h(x)=( f(x),g(x))$. The set $\{ (a,b)\in h(Y): b=0\}$ is closed in $h(Y)$ and is homeomorphic to $\mathbb{P}$, so $h(Y)$ is not $\sigma$-compact. Extend $h$ to $ \hat{h}$ mapping $\beta Y$ into $[0,1]^2$. Then $h(Y)= \hat{h}(Y)$ is not $\sigma$-compact, since $\{ (a,b)\in h(Y): b=0\}$ is closed in $h(Y)$ and is homeomorphic to $\mathbb{P}$. But then $\hat{h}(X)$ is not \v{C}ech-complete.
\end{proof}
\begin{cor}
Every co-analytic projectively  \v{C}ech-complete Menger space is $\sigma$-compact.
\end{cor}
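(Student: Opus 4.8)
The plan is to derive the corollary directly from Theorem~\ref{3.9} together with Theorem~\ref{thm1}. Suppose $X$ is a co-analytic, projectively \v{C}ech-complete Menger space. The projective \v{C}ech-completeness hypothesis is exactly what is needed to invoke Theorem~\ref{3.9}, so first I would apply that theorem to conclude that $X$ is itself \v{C}ech-complete.

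Once \v{C}ech-completeness of $X$ is in hand, the rest is immediate. A \v{C}ech-complete space that is Menger is in particular Lindel\"of (Menger implies Lindel\"of), so $X$ is a \v{C}ech-complete Menger space. By Theorem~\ref{thm1}, every \v{C}ech-complete Menger space is $\sigma$-compact. Hence $X$ is $\sigma$-compact, which is the desired conclusion.

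The only conceptual point worth flagging is that the corollary is genuinely a two-line composition: the co-analyticity and projective \v{C}ech-completeness feed Theorem~\ref{3.9} to upgrade $X$ to a \v{C}ech-complete space, and then Mengerness plus Theorem~\ref{thm1} finishes the job. I would not expect any real obstacle here; the substantive work has already been carried out in the proof of Theorem~\ref{3.9} (the extension-to-$[0,1]^2$ argument showing that non-$\sigma$-compactness of the remainder obstructs \v{C}ech-completeness) and in Theorem~\ref{thm1} (the paracompact $p$-space/perfect-preimage reduction to Hurewicz's metrizable result). The corollary simply records that these two results chain together, with Mengerness doing double duty: it is a hypothesis of Theorem~\ref{3.9}'s consequence being useful and it is the hypothesis that makes Theorem~\ref{thm1} applicable.
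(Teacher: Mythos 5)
Your proof is correct and is exactly the paper's argument: the paper's proof of this corollary is simply ``By Theorem~\ref{3.9} and Theorem~\ref{thm1},'' i.e., use projective \v{C}ech-completeness plus co-analyticity to get \v{C}ech-completeness, then apply Theorem~\ref{thm1} to the resulting \v{C}ech-complete Menger space. Your observation that Mengerness is needed only for the second step (not as a hypothesis of Theorem~\ref{3.9}) is also accurate.
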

\begin{proof}
By Theorem \ref {3.9} and Theorem \ref{thm1}.
\end{proof}

\begin{cor}
Co-analytic determinacy implies that if $X$ is co-analytic Menger and has closed sets $G_\delta$, then $X$ is $\sigma$-compact. 
\end{cor}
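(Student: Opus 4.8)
The plan is to imitate the proof of Theorem~\ref{p-an}: reduce $X$ to a separable metrizable perfect image and apply to that image the co-analytic Hurewicz dichotomy that co-analytic determinacy provides. The whole argument rests on one structural claim, namely that a co-analytic Menger space $X$ with closed sets $G_\delta$ admits a perfect map onto a separable metrizable space; equivalently, that such an $X$ is a Lindel\"of $p$-space. Granting this, the remaining steps are short, and they are exactly the preservation facts already used in this section.

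So suppose $f\colon X\to Y$ is a perfect map onto a separable metrizable space $Y$. Then $Y$ is a perfect image of the co-analytic space $X$, hence co-analytic by Lemma~\ref{lemco}; and $Y$, being a continuous image of the Menger space $X$, is Menger. Viewing $Y$ as a co-analytic subset of a Polish space, I would apply the co-analytic instance of the dichotomy of Lemma~\ref{lem3} --- valid under co-analytic determinacy, exactly as in Theorem~\ref{thm5} --- to conclude that either $Y$ is $\sigma$-compact or $Y$ contains a closed copy of $\mathbb{P}$. The second alternative is impossible: a closed subspace of a Menger space is Menger, whereas $\mathbb{P}$ is not Menger by Lemma~\ref{lem2}. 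Hence $Y$ is $\sigma$-compact, say $Y=\bigcup_{n}K_{n}$ with each $K_{n}$ compact; since preimages of compact sets under a perfect map are compact, $X=\bigcup_{n}f^{-1}(K_{n})$ is $\sigma$-compact, as required.

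The hard part, and the step I expect to be the main obstacle, is the structural claim itself, i.e.\ producing the perfect map onto a separable metrizable space. This is the co-analytic counterpart of what 3.5.3 of \cite{RoJa80} supplies in the proof that $K$-analytic Menger spaces with closed sets $G_\delta$ are $\sigma$-compact, and it is the only place where the three hypotheses must be used together. The condition ``closed sets $G_\delta$'' alone does not force the $p$-space property --- a perfectly normal Lindel\"of space, such as the Sorgenfrey line, need not be a $p$-space --- so co-analyticity, equivalently the fact that the remainder $\beta X\setminus X$ is analytic, has to enter essentially. I would attack this through the remainder: if $\beta X\setminus X$ is $\sigma$-compact then $X$ is \v{C}ech-complete and we are already done by Theorem~\ref{thm1}, so the substance is to show that, in the presence of the closed-$G_\delta$ and Menger hypotheses, the analytic structure of the remainder still delivers a perfect map of $X$ onto a second countable space. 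Once that Lindel\"of $p$-space property is in hand, the reduction above --- identical in spirit to the proofs of Theorems~\ref{p-an} and \ref{3.9} --- completes the argument, every other ingredient being a routine preservation fact for Mengerness, co-analyticity (Lemma~\ref{lemco}), and $\sigma$-compactness under perfect maps.
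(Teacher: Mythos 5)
Your second paragraph --- mapping $X$ perfectly onto a separable metrizable $Y$, applying Lemma \ref{lemco} to get $Y$ co-analytic, invoking the co-analytic instance of the dichotomy of Lemma \ref{lem3} under co-analytic determinacy, and pulling $\sigma$-compactness back along the perfect map --- is exactly how the paper finishes, namely by the argument of Theorem \ref{p-an}. But the ``structural claim'' you defer, that a co-analytic Menger space with closed sets $G_\delta$ is a Lindel\"of $p$-space (equivalently, admits a perfect map onto a separable metrizable space), is the entire content of this corollary, and you do not prove it; you only explain why it should be the crux. As written, your text is a correct reduction together with an open problem, not a proof.

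The paper closes that gap with a single citation: by a theorem of Arhangel'ski\u{\i} \cite{Arh013}, a space with closed sets $G_\delta$ whose remainder is a continuous image of a separable metrizable space is a $p$-space. Here the remainder $\beta X\setminus X$ is analytic, hence a continuous image of $\mathbb{P}$, so that theorem applies at once; since $X$ is Menger, hence Lindel\"of, it is a Lindel\"of $p$-space and so maps perfectly onto a separable metrizable space, after which your reduction runs verbatim. Your instinct that co-analyticity must be combined with the closed-$G_\delta$ hypothesis is sound (the Sorgenfrey line remark is a correct sanity check), and your proposed case split on whether $\beta X\setminus X$ is $\sigma$-compact is a reasonable opening move, but the non-$\sigma$-compact case is precisely where Arhangel'ski\u{\i}'s theorem (or some substitute for it) is indispensable; without it the argument is incomplete at its decisive step.
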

\begin{proof}
  The remainder of $X$ is a continuous image of  a separable metrizable space. Our extra hypothesis then implies $X$ is a p-space \cite{Arh013}. \end{proof}

\begin{defn}[~\cite{Na69}]
A space $X$ is  a  \textbf {$\Sigma$-space} if it is a continuous image of a perfect pre-image of a metrizable space. In particular, a Lindel\"of \textbf {$\Sigma$-space} is a continuous image of a Lindel\"of p-space.
\end{defn} 

\begin{cor}\label{14}
Co-analytic determinacy implies that co-analytic Menger $\Sigma$-spaces are $\sigma$-compact. 
\end{cor}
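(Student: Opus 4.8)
The plan is to reduce the problem to a separable metrizable space and then apply the co-analytic case of Theorem~\ref{thm5}, which needs only co-analytic determinacy. Since $X$ is Menger it is Lindel\"of, so by hypothesis $X$ is a Lindel\"of $\Sigma$-space, that is, a continuous image of a Lindel\"of $p$-space. By Lemma~\ref{any} its remainder $W=\beta X\setminus X$ is analytic; as every analytic space is a continuous image of $\mathbb{P}$, and $\mathbb{P}$ is a Lindel\"of $p$-space, $W$ is itself a Lindel\"of $\Sigma$-space. Thus $X$ is a Lindel\"of $\Sigma$-space whose remainder is also Lindel\"of $\Sigma$.

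The crux, and the step I expect to be the main obstacle, is to upgrade the $\Sigma$-structure to a $p$-structure using the analyticity of $W$. That co-analyticity is genuinely needed is shown by Okunev's space, which is Menger, Lindel\"of $\Sigma$ and not $\sigma$-compact (so it cannot be co-analytic), and also by the fact that under $\omega_{1}=\omega_{1}^{L}$ and $\mathfrak d>\aleph_{1}$ there is a co-analytic Menger non-$\sigma$-compact \emph{set of reals}, which is a separable metrizable, hence Lindel\"of $\Sigma$, space. I would invoke Arhangel'ski\u{\i}'s remainder theory --- the circle of results around \cite{Arh005} already used in Theorem~\ref{p-an} --- to obtain, for a Lindel\"of $\Sigma$-space $X$ with Lindel\"of $\Sigma$ remainder, the dichotomy: either $W$ is $\sigma$-compact, or $X$ is a Lindel\"of $p$-space. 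Confirming that such a dichotomy is available and applies here is where essentially all the work lies.

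Granting the dichotomy, both branches close quickly. If $W$ is $\sigma$-compact then $W$ is an $F_\sigma$ in $\beta X$, so $X$ is a $G_\delta$ in $\beta X$ and therefore \v{C}ech-complete; Theorem~\ref{thm1} then gives that $X$ is $\sigma$-compact, with no appeal to determinacy. If instead $X$ is a Lindel\"of $p$-space, I argue as in Theorems~\ref{thm1} and~\ref{p-an}: $X$ is a perfect preimage of a separable metrizable space $Y$. Mengerness passes to continuous images, so $Y$ is Menger, and since $Y$ is a perfect image of the co-analytic space $X$, Lemma~\ref{lemco} shows $Y$ is co-analytic; hence $Y$ is a co-analytic Menger set of reals. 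Co-analytic determinacy (the co-analytic case of Theorem~\ref{thm5}) makes $Y$ $\sigma$-compact, and a perfect preimage of a $\sigma$-compact space is $\sigma$-compact, so $X$ is $\sigma$-compact. This second branch is exactly where determinacy is indispensable, which accounts for why the conclusion cannot hold in ZFC alone.

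Should the dichotomy not be available off the shelf in this exact form, the fallback is to prove directly that a co-analytic, Menger, non-$\sigma$-compact Lindel\"of $\Sigma$-space admits a perfect map onto a separable metrizable space, by feeding the constraint that the analytic remainder places on the fibers into Nagami's representation of $X$ as a continuous image of a Lindel\"of $p$-space; here the Menger hypothesis is precisely what excludes the Okunev-type behaviour that would otherwise obstruct a perfect quotient.
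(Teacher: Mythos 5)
Your proposal is correct in substance and its endgame coincides with the paper's, but you reach the key intermediate fact --- that $X$ is a Lindel\"of $p$-space --- by a different route, and your one unverified step deserves comment. The paper works on the remainder instead: since $Y=\beta X\setminus X$ is analytic it has a countable network, hence is hereditarily Lindel\"of, and Proposition 2.6 of \cite{Arh013} (a hereditarily Lindel\"of remainder of a Lindel\"of $\Sigma$-space is a $p$-space) shows $Y$ is an analytic Lindel\"of $p$-space; Theorem~\ref{p-an} is then quoted wholesale, and inside its proof sits exactly your second branch (perfect map onto a separable metrizable space, Lemma~\ref{lemco}, co-analytic determinacy, perfect preimages preserve $\sigma$-compactness). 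The dichotomy you rightly flag as the crux of your version is indeed available, and in a stronger, unconditional form: if $X$ and its remainder are both Lindel\"of $\Sigma$, then both are Lindel\"of $p$-spaces. This is immediate from the two lemmas of \cite{Arh013a} quoted later in the paper ($X$ is Lindel\"of $\Sigma$ iff its remainder is an s-space; Lindel\"of $p$ $=$ Lindel\"of $\Sigma$ $+$ s-space), and the paper explicitly remarks that those lemmas give ``another proof'' of this corollary --- your proposal is essentially that alternative proof. Two caveats: the needed result does not live in \cite{Arh005}, where you point --- that paper yields only that remainders of Lindel\"of $p$-spaces are Lindel\"of $p$, which cannot upgrade $\Sigma$ to $p$ --- so the citation should be \cite{Arh013a} (or you should substitute the paper's hereditarily-Lindel\"of argument via \cite{Arh013}); and the $\sigma$-compact branch of your dichotomy is redundant given the stronger statement, though harmless and even pleasant, since it isolates a case where, via \v{C}ech-completeness and Theorem~\ref{thm1}, no determinacy is needed at all.
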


\begin{proof}
Let $X$ be Menger $\Sigma$-space and $Y$ be an analytic remainder of $X$. It suffices to prove $Y$ is a p-space, for then we can apply Theorem \ref{p-an}. Since $Y$ is analytic, it has a countable network and hence is hereditarily Lindel\"of. By Proposition 2.6. of \cite{Arh013},  if $X$ is  a Lindel\"of $\Sigma$-space, then a hereditarily Lindel\"of remainder of $X$  is   a p-space.
\end{proof}

After this paper was more-or-less complete,  A.V. Arhangel'ski\u{\i} sent us a copy of his paper \cite{Arh013a} in response to Question 2 below.  In \cite{Arh013a}, he introduces  
 the class of \textit{s-spaces:}
 
 \begin{defn}[~\cite{Arh013a}]  A Tychonoff  space $X$ is called an \textbf{ s-space} if there exists a countable \textbf{ open source} for $X$ in some compactification $bX$ of $X$, i.e. a countable collection $\mathcal{S}$ of open subsets of $bX$ such that $X$ is a union of some family of intersections of non-empty subfamilies of  $\mathcal{S}$. 
 \end{defn} 
  
  Arhangel'ski\u{\i} proves that:

\begin{lem}
If X is a space with remainder Y, then $X$ is a Lindel\"of $\Sigma$-space if and only if $Y$ is an s-space.
\end{lem}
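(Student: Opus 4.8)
The plan is to route both implications through a single external characterization of Lindel\"of $\Sigma$-spaces together with a de Morgan duality carried out inside a compactification. First I would establish the following robust (compactification-form of Nagami's) criterion: for \emph{any} compactification $bX$ of $X$, the space $X$ is a Lindel\"of $\Sigma$-space if and only if there is a countable family $\{C_n : n<\omega\}$ of compact subsets of $bX$ that \emph{separates} $X$ from its remainder, in the sense that for every $x \in X$ and every $y \in bX \setminus X$ there is an $n$ with $x \in C_n$ and $y \notin C_n$. The nontrivial direction, that a Lindel\"of $\Sigma$-space admits such a family in every compactification, I would prove by taking a countable network $\{N_m\}$ modulo a compact cover of $X$ and setting $C_m = \overline{N_m}^{\,bX}$; normality of $bX$ separates a member of the compact cover from any prescribed remainder point through the closure of a suitable network element. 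For the converse I would form, for each $x \in X$, the compact set $K_x = \bigcap\{C_n : x \in C_n\}$, note that separation forces $K_x \subseteq X$, and verify that $\{C_n \cap X\}$ is a countable network modulo the compact cover $\{K_x : x\in X\}$, so that $X$ is Lindel\"of $\Sigma$.

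Granting this, both directions of the lemma become the same computation. If $X$ is Lindel\"of $\Sigma$ and $Y = bX \setminus X$, I would take the separating family $\{C_n\}$, put $S_n = bX \setminus C_n$, and for $y \in Y$ set $A_y = \{n : y \in S_n\}$; the separation property says exactly that $\bigcap_{n \in A_y} S_n$ is disjoint from $X$, hence lies in $Y$ and contains $y$, so that $Y = \bigcup_{y} \bigcap_{n \in A_y} S_n$ exhibits $Y$ as a union of intersections of nonempty subfamilies of $\{S_n\}$ (adjoining $bX$ itself to the family keeps every $A_y$ nonempty), i.e.\ a source. Conversely, beginning from a source $\{S_n\}$ for $Y$ and writing $C_n = bX \setminus S_n$, the identical de Morgan manipulation yields a countable family of compacta separating $X$ from $Y$, and the criterion returns that $X$ is Lindel\"of $\Sigma$.

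Two bookkeeping points remain. First, a source must live in a genuine compactification of $Y$, so I would replace $bX$ by $\overline{Y}^{\,bX}$ and restrict the $S_n$; the separation argument still excludes the limit points in $\overline{Y} \cap X$, and the locally compact part $X \setminus \overline{Y}$ (the interior of $X$ in $bX$) is benign and treated separately. Second, and this is the main obstacle, the hypothesis that $Y$ is an s-space only supplies a source in \emph{some} compactification of $Y$, a priori unrelated to the given $bX$, so the de Morgan step has no ambient space in which to operate. I would bridge this by proving that the source property transfers between compactifications of $Y$: for the canonical map $\pi$ between two comparable compactifications one checks $\pi^{-1}(Y) = Y$, which lets a source pull back cleanly along $\pi$ to any larger compactification, and push down to any smaller one because the smaller remainder is a continuous image of the larger remainder while Lindel\"of $\Sigma$-ness is preserved by continuous images. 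Transferring the given source to $\overline{Y}^{\,bX}$ in this way relocates the whole computation inside $bX$ and closes the argument. The delicate work throughout is keeping the compactifications of $X$ and of $Y$ aligned; once this transfer is in hand, the logical core of the equivalence is simply complementation.
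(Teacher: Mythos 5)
A preliminary remark: the paper contains no proof of this lemma at all --- it is cited from Arhangel'ski\u{\i} \cite{Arh013a} as a known result --- so there is no in-paper argument to compare yours against, and I judge it on its own terms. Your toolkit is the right one, and your forward direction is correct: the separation characterization of Lindel\"of $\Sigma$-spaces by countably many compacta in an arbitrary compactification, the de Morgan passage from a separating family to an open source for the complement, and the restriction of that source to $\overline{Y}^{\,bX}$ (where it must live, since $bX$ need not be a compactification of $Y$) are all sound; so is the pull-back of sources along canonical maps between compactifications of $Y$, using $\pi^{-1}(Y)=Y$.

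The genuine gap is in the converse, located exactly at the point you call ``benign.'' Your transfer can only land a source for $Y$ in $\overline{Y}^{\,bX}$, and the de Morgan computation performed there yields only that $\overline{Y}^{\,bX}\setminus Y=\overline{Y}\cap X$ is Lindel\"of $\Sigma$ --- a closed subspace of $X$, not $X$. To capture all of $X$ you would need a source for $Y$ in $bX$ itself, and $bX$ is a compactification of $Y$ only when $Y$ is dense in $bX$. The leftover piece $X\setminus\overline{Y}=bX\setminus\overline{Y}$ is an arbitrary locally compact space that nothing in your argument controls, and no argument can: read literally with the paper's definitions, the converse implication is false. Take $X=[0,\omega_1)$, $bX=[0,\omega_1]$, $Y=\{\omega_1\}$. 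Being compact, $Y$ is its own unique compactification and $\{Y\}$ is a countable open source, so $Y$ is an s-space (by the same token every compact, \v{C}ech-complete, or locally compact space is an s-space); yet $X$ is not even Lindel\"of. What your machinery does prove is the lemma under the additional hypothesis that $Y$ is dense in $bX$ (equivalently, $X$ is nowhere locally compact): then $\overline{Y}^{\,bX}=bX$, and the push-down step even becomes unnecessary, since \v{C}ech's theorem applied to the canonical map $\pi\colon\beta Y\to bX$ gives $X=\pi(\beta Y\setminus Y)$, a continuous image of the Lindel\"of $\Sigma$-space $\beta Y\setminus Y$. Equivalently, one can state the result with the roles paired the other way --- a space is an s-space if and only if its remainders are Lindel\"of $\Sigma$ --- which is the formulation your machinery establishes in full generality and avoids the density issue. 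Note that the present paper only ever uses the forward direction, which you did prove.
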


\begin{lem}
$X$ is a  Lindel\"of p-space if and only if it is a Lindel\"of $\Sigma$-space and an s-space.
\end{lem}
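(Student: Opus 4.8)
The plan is to prove both implications directly. Since a Lindel\"of $\Sigma$-space is Lindel\"of, and a $p$-space that is Lindel\"of is by definition a Lindel\"of $p$-space, the real content splits into: (a) a Lindel\"of $p$-space is an s-space (that it is a Lindel\"of $\Sigma$-space is immediate, since a Lindel\"of $p$-space is trivially a continuous image of a Lindel\"of $p$-space); and (b) a Lindel\"of $\Sigma$ s-space is a $p$-space. I would obtain (b) by manufacturing a perfect map of $X$ onto a separable metrizable space and then invoking the characterization of Lindel\"of $p$-spaces as the perfect preimages of separable metrizable spaces used in the proof of Theorem~\ref{thm1}.

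For (a), fix a compactification $bX$. As a $p$-space, $X$ carries a feathering in $bX$: a sequence $\{\gamma_n\}_{n<\omega}$ of families of open subsets of $bX$, each covering $X$, with $\bigcap_n\op{St}(x,\gamma_n)\subseteq X$ for every $x\in X$; by Lindel\"ofness each $\gamma_n$ may be taken countable, since passing to a countable subcover only shrinks the stars and preserves the trapping. Put $\mathcal{S}=\bigcup_n\gamma_n$, a countable collection of open subsets of $bX$. For each $x$ choose $U_n\in\gamma_n$ with $x\in U_n$; then $x\in\bigcap_n U_n\subseteq\bigcap_n\op{St}(x,\gamma_n)\subseteq X$, so $X$ is the union of the sets $\bigcap_n U_n$, each an intersection of a nonempty subfamily of $\mathcal{S}$. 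Hence $\mathcal{S}$ is an open source and $X$ is an s-space.

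For (b), let $\mathcal{S}=\{S_n\}$ be an open source in $bX$. First note that the maximal trapping family already suffices: if $x\in\bigcap\mathcal{S}_\alpha\subseteq X$ then $\mathcal{S}_\alpha\subseteq\{S_n:x\in S_n\}$, so $T(x):=\bigcap\{S_n:x\in S_n\}\subseteq X$ for every $x\in X$. The idea is to realize the $T(x)$ as the fibres of a perfect map. This is immediate if the source is \emph{cozero}: choosing continuous $f_n\colon bX\to[0,1]$ with $\{f_n>0\}=S_n$ and setting $\phi=(f_n)_n\colon bX\to[0,1]^\omega$, one has $\phi(z)=\phi(x)\Rightarrow(z\in S_n\Leftrightarrow x\in S_n)\Rightarrow z\in T(x)\subseteq X$. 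Thus $\phi^{-1}(\phi(x))\subseteq X$, whence $\phi^{-1}(M)=X$ for $M=\phi(X)$, and the restriction $g=\phi|_X\colon X\to M$ is perfect onto the separable metrizable space $M\subseteq\phi(bX)$; so $X$ is a Lindel\"of $p$-space.

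The difficulty is that an arbitrary open source need not be cozero, and this is the sole point at which the Lindel\"of $\Sigma$ hypothesis must be spent. The reduction I would aim for is: it suffices to cover each $S_n\cap X$ by countably many cozero sets $W$ of $bX$ with $\overline{W}\subseteq S_n$, for then the family $\mathcal{W}$ of all such $W$ is again a source (for each $x$ and each $S_n\ni x$ some $W\in\mathcal{W}$ has $x\in W$ and $\overline{W}\subseteq S_n$, so $\bigcap\{W\in\mathcal{W}:x\in W\}\subseteq T(x)\subseteq X$), and it is cozero, so the previous paragraph applies. Producing these countable cozero covers is the crux, and here I would invoke the Nagami description of a Lindel\"of $\Sigma$-space---a countable network modulo a cover by compact sets, equivalently a compact-valued usco image of a separable metrizable space---to control the open pieces $S_n\cap X$ and extract countable subfamilies with closures trapped in $S_n$. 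I expect essentially all the work of the lemma to lie in this cozero refinement and in verifying that the resulting diagonal map is perfect; the degenerate case in which $X$ is locally compact (the source then collapsing to witness $\sigma$-compactness) I would dispatch separately. As a consistency check one can observe that (a) dovetails with the preceding Lemma via remainders, but that route is circular for (b), which is why I would argue (b) by the construction above rather than by duality.
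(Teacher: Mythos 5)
Your direction (a) is correct and complete: shrinking a feathering of $X$ in $bX$ to countable families (Lindel\"ofness only shrinks stars, so the trapping survives) and taking $\mathcal{S}=\bigcup_n\gamma_n$ does yield a countable open source, and Lindel\"of $p$ $\Rightarrow$ Lindel\"of $\Sigma$ is indeed trivial. The cozero case of (b) is also correct and clean: a countable cozero source gives the diagonal $\phi=(f_n)_n\colon bX\to[0,1]^\omega$ with $X=\phi^{-1}(\phi(X))$, the restriction of a perfect map to a full preimage is perfect, and so $X$ is a perfect preimage of a separable metrizable space. (Indeed, together with (a) this shows the Lindel\"of $p$-spaces are exactly the spaces admitting a countable \emph{cozero} source, with no $\Sigma$-hypothesis anywhere.)

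The problem is the bridge from an arbitrary open source to a cozero one, which you rightly identify as carrying all the weight: it is not merely unproved, but false in the form you propose. You require that each $S_n\cap X$ be covered by countably many cozero sets $W$ of $bX$ with $\overline{W}\subseteq S_n$. Take $X=bX=2^{\omega_1}$, which is compact and hence both a Lindel\"of $\Sigma$-space and an s-space, with source $\mathcal{S}=\{bX,\ bX\setminus\{p\}\}$; this is a legitimate source, since $X$ is the intersection of the subfamily $\{bX\}$. If countably many cozero sets with closures inside $bX\setminus\{p\}$ covered $bX\setminus\{p\}$, then $\{p\}$ would be a $G_\delta$-point of $2^{\omega_1}$, which it is not. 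So no amount of Nagami machinery can produce the covers you ask for; the reduction itself must first be weakened --- one only needs to trap each $x$ inside a single witnessing intersection $\bigcap\mathcal{S}_\alpha\subseteq X$ rather than inside all of $T(x)$, which forces some pruning or normalization of the source --- and after that the substantive use of the Lindel\"of $\Sigma$ property still has to be carried out, none of which is done. Note finally that the paper contains no proof to compare against (the lemma is quoted from Arhangel'ski\u{\i} \cite{Arh013a}), and that the remainder-duality route you set aside as ``circular'' is not circular: the companion lemma ($X$ is Lindel\"of $\Sigma$ if and only if its remainder is an s-space) together with Arhangel'ski\u{\i}'s theorem (cf.\ \cite{Arh005}) that a Lindel\"of $\Sigma$-space with a Lindel\"of $\Sigma$ remainder is a Lindel\"of $p$-space yields the hard direction without invoking the statement being proved.
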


These can be used to give another proof of Corollary \ref{14}. Arhangel'ski\u{\i} asks in the paper whether first countable $\sigma$-spaces with weight $\leq 2^{\aleph_{0}}$ are s-spaces. He told us, however, that what he should have asked is:

\begin{que}
Are $\sigma$-spaces of countable type and $w\leq 2^{{\aleph_{0}}}$ s-spaces?
\end{que}

We could prove that co-analytic determinacy implies co-analytic Menger spaces are $\sigma$-compact, if we could affirmatively answer the following:

\begin{que}
Is every space of countable type with a countable network metrizable?
\end{que}

Recall a space is {\textit{of countable type}} if each compact subspace  is included in a compact subspace which has a countable base for the open sets including it. Now, the analytic remainder of a co-analytic Menger space is of countable type, because  its remainder is Lindel\"of \cite{HenIsb57}. Similarly, the co-analytic Menger space is itself of countable type. Now the analytic remainder has a countable network, so   if Question 2 has an affirmative answer, it is metrizable. But then it is a Lindel\"of p-space, so its co-analytic remainder is a Lindel\"of p-space as well \cite{Arh005}. But now we can proceed as in the proof of Theorem \ref{thm1}.\hfill\qedsymbol
\\
\indent As Arhangel'ski\u{\i} points out, Question 2 is a special case of  Question 1. Notice that compact subspaces of a space with a countable network have a countable base. It follows that a space of countable type with a countable network is actually first countable; indeed its compact subspaces have countable character, because they are included in compact metrizable subspaces which have countable character in the whole space. So let us ask:\\
\begin{que} Suppose a space has a countable network and each compact subspace of it has countable (outer) character. Is the space metrizable? 
\end{que}

By Michael \cite{M} it would suffice to show such a space has a countable {\it k-network}.\\

Note, incidentally, that co-analytic determinacy has large cardinal strength. See \cite{Kan94}, where it is pointed out (see page 444) that  it is equiconsistent with the existence of $0^ \#$ ( and hence with the existence of a measurable cardinal). \\

Note that Okunev's space is not co-analytic. To see this, let $V$ denote Okunev's space. Since  the remainder of $V$ in its Stone-\v{C}ech compactification is Borel but not Baire \cite{BuTa012}, $\beta V\setminus V$ cannot  have closed sets $G_\delta$, so the remainder is not analytic. Hence  by Lemma \ref{any},  $V$ is not co-analytic.
  
  As promised earlier, 
 \begin{thm}[~{\cite{MiFr}}] \label{main} 
 $V=L$ implies that there is a co-analytic Menger set of reals which is not $\sigma$-compact. 
\end{thm}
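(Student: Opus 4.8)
The plan is to construct a co-analytic set of reals that is Menger but not $\sigma$-compact, assuming $V=L$. The overall strategy mirrors the remark made earlier in the paper: under $V=L$ we have a $\mathbf{\Sigma}^1_2$ well-ordering of the reals, and more sharply, $V=L$ yields good definability for sets constructed by transfinite recursion along this well-ordering. The aim is to build a set $X$ of size $\aleph_1$ (which will be Menger by Hurewicz's theorem, quoted from \cite{Hur27}, since $V=L$ gives $\mathfrak{d}=\aleph_1$ and every set of reals of size $<\mathfrak{d}$ is Menger, though here I must instead directly ensure the Menger property because $|X|=\aleph_1=\mathfrak{d}$ under $V=L$). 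The delicate point is that under $V=L$ the cardinality argument alone does not suffice, so Mengerness must be engineered into the construction, not merely inherited from size.

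First I would set up the recursion. Enumerate in an $\w_1$-sequence all the ``threats'' to Mengerness and to non-$\sigma$-compactness: specifically, enumerate all countable sequences $\langle \mc{U}_n : n<\w\rangle$ of basic open covers (coded by reals) so that at each stage I can diagonalize, choosing points of $X$ that defeat any attempt to witness that $X$ fails to be Menger, and simultaneously enumerate candidate $\sigma$-compact subsets (again coded by reals, e.g. sequences of compact sets) so that I can throw in a point avoiding each, guaranteeing $X$ is not contained in, and eventually not equal to, any $\sigma$-compact set. Following the careful construction of the Menger non-$\sigma$-compact example in \cite{Tsa011}, at stage $\alpha$ I would use the $L$-least real not yet ``decided'' to extend $X$, maintaining the two running requirements. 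The key is that one can simultaneously meet the Menger selection requirements (finitely many selections from each cover, unioning to a cover) and keep the set from being swallowed by any $\sigma$-compact set.

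The main obstacle, and the heart of the argument, is verifying that the resulting set $X$ is genuinely $\mathbf{\Pi}^1_1$ (co-analytic) rather than merely $\mathbf{\Sigma}^1_2$. The crude construction only gives a $\mathbf{\Sigma}^1_2$ set. To push the complexity down to $\mathbf{\Pi}^1_1$ one invokes the finer machinery available under $V=L$: the Gödel condensation argument, which expresses membership in $X$ via a statement asserting the existence of a countable transitive model of a suitable fragment of $V=L$ containing the relevant reals and closed under the construction up to the appropriate point. This kind of $\Sigma_1$-over-$L_{\w_1}$ characterization, combined with the fact that well-foundedness of the coded models is itself $\mathbf{\Pi}^1_1$, is what yields a co-analytic definition — this is precisely the technique by which $V=L$ produces co-analytic sets of size $\aleph_1$ (as quoted from \cite[13.12]{Kan94}). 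I would therefore arrange the recursion to be ``local'' enough that membership is absolutely decided inside small initial segments of $L$, so that the condensation-based definition applies.

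Since this theorem is attributed to \cite{MiFr}, I would at this point defer the detailed verification to that reference, noting that the essential content is the combination of (i) the combinatorial construction guaranteeing Menger and non-$\sigma$-compact, carried out as in \cite{Tsa011} but along the $L$-well-ordering, and (ii) the definability refinement via condensation that lowers the pointclass to $\mathbf{\Pi}^1_1$. The phrase ``we will do better later,'' used earlier in the excerpt, signals that the authors regard this $\mathbf{\Pi}^1_1$ strengthening as the genuine improvement over the easy $\mathbf{\Sigma}^1_2$ example, and so the co-analytic definability is exactly the step that deserves the most care.
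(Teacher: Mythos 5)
There is a genuine gap, and it lies exactly where you locate the ``heart'' of the combinatorics. Your plan is to secure the Menger property by transfinite diagonalization: enumerate cover-sequences as ``threats'' and, at each stage, choose points ``that defeat any attempt to witness that $X$ fails to be Menger.'' This mechanism is backwards and cannot work. Mengerness is a $\forall\exists$ covering property of the \emph{final} set: for every sequence of open covers there must exist finite selections whose union covers all of $X$, including the points added at later stages. Adding points can only damage Mengerness, never help it; there is no point one can add at stage $\alpha$ that ``defeats'' a potential witness to non-Mengerness, and a requirement of the form ``this cover-sequence admits a good selection for the final set'' is not decided at any countable stage of the recursion. (Your diagonalization against candidate $\sigma$-compact supersets is fine --- that is a genuine closing-off argument --- but the Menger half is not.) So the construction as described does not produce a Menger set, and deferring to \cite{MiFr} at that point defers precisely the step your sketch gets wrong.

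What is missing is the structural device the paper actually uses: a \emph{scale}, i.e.\ a subset of $\omega^\omega$ well-ordered by $\leq^*$ and dominating. The paper's proof is modular: (i) by \cite{MiFr}, $V=L$ yields a \emph{co-analytic} scale $S$ (this is where the condensation/$\mathbf{\Pi}^1_1$-definability machinery you describe in your third paragraph is really deployed --- that part of your proposal is the right idea, but it is applied to the wrong underlying construction); (ii) by the theorem of Bartoszy\'nski and Tsaban \cite{BaTs006}, for \emph{any} scale $S$, the set $S\cup[\omega]^{<\omega}$ is Menger and not $\sigma$-compact --- here Mengerness is a global consequence of the $\leq^*$-well-ordered dominating structure, proved as a theorem about scales rather than engineered stage by stage; and (iii) the union of a co-analytic set with a countable set is co-analytic, since $A\cup B=\mathbb{R}\setminus((\mathbb{R}\setminus A)\cap(\mathbb{R}\setminus B))$ and analytic sets are closed under finite intersections ($14.4$ of \cite{Kec95}). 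Replacing your cover-diagonalization with steps (i)--(iii) --- in particular replacing ``diagonalize to ensure Menger'' with ``build a $\leq^*$-increasing dominating sequence and invoke \cite{BaTs006}'' --- turns your outline into the paper's proof.
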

The crucial point here is 

\begin{lem}[~{\cite{MiFr}}] 
$V=L$ implies that there is a co-analytic scale.
\end{lem}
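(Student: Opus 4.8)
The plan is to carry out the classical ``coding into a $\Pi^1_1$ set'' construction relative to the $\Sigma^1_2$-good wellordering $<_L$ of $\omega^\omega$ that $V=L$ provides. Recall that under $V=L$ we also have CH, hence $\mathfrak{d}=\aleph_1$, so a scale is just a $\leq^*$-increasing dominating sequence $\langle f_\alpha:\alpha<\omega_1\rangle$, and such a sequence can be built by transfinite recursion: at stage $\alpha$, the countably many predecessors $\{f_\beta:\beta<\alpha\}$ together with the $\alpha$-th member $g_\alpha$ of a fixed $<_L$-enumeration of a dominating family (of size $\aleph_1$, by CH) form a countable, hence $\leq^*$-bounded, set, so some real dominates all of them. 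First I would let $f_\alpha$ be the $<_L$-least real meeting these domination requirements; this guarantees the resulting family is $\leq^*$-increasing and, because each $g_\alpha$ is eventually dominated, dominating.

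The point is to arrange co-analyticity rather than the $\Sigma^1_2$-definability that this recursion naively yields. To that end I would strengthen the requirement on $f_\alpha$ so that, besides dominating its predecessors, it \emph{self-codes} the entire history of the construction up to $\alpha$. The apparent conflict that domination demands large values while coding seems to fix values is dealt with in the usual way, by writing the code into the low-order bits (say the parities) of otherwise rapidly growing values, so that $f_\alpha$ can simultaneously dominate and carry an arbitrary code. Concretely I would demand that $f_\alpha$ encode the ordinal $\alpha$, the sequence $\langle f_\beta:\beta\le\alpha\rangle$, and a wellorder of $\omega$ of some countable type $\gamma$ together with the elementary diagram of a countable structure; since $\langle f_\beta:\beta\le\alpha\rangle\in L_\gamma$ for some countable $\gamma$, such an $f_\alpha$ exists by CH, and one again takes the $<_L$-least witness. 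By G\"odel's condensation lemma any code passing the appropriate first-order checks really does describe a genuine level $L_\gamma$, so the assertion that ``$L_\gamma$ believes the $f_\beta$ are exactly the $<_L$-least reals produced by this recursion'' becomes verifiable directly from the code.

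With this in place, membership in $A=\{f_\alpha:\alpha<\omega_1\}$ is decided as follows: a real $f$ lies in $A$ if and only if the information decoded from its reserved bits is internally consistent --- the coded sequence is $\leq^*$-increasing, each entry is certified $<_L$-least by satisfaction in the coded $L_\gamma$, the coded linear order is wellfounded, and $f$ is the final entry. Crucially there is \emph{no} existential quantifier over reals here, because the witnessing sequence and constructible level are \emph{read off} $f$ itself rather than quantified; every clause is arithmetic in $f$ except for wellfoundedness of the coded order, which is $\Pi^1_1$. Hence $A$ is $\Pi^1_1$, i.e.\ co-analytic, as desired. The hard part is exactly this last definability bookkeeping: making the self-coding rich enough that the a priori $\Sigma^1_2$ assertion ``$f_\alpha$ is the $<_L$-least real with the required properties'' is converted, via condensation, into a clause checkable inside a wellfounded structure decoded from $f$, so that the only genuinely non-arithmetic ingredient that survives is a single wellfoundedness demand.
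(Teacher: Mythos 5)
The paper gives no proof of this lemma at all: it is quoted directly from Miller--Fremlin \cite{MiFr}, and the only argument supplied in the text is the subsequent deduction of Theorem \ref{main} from it. Your proposal therefore does something genuinely different: it reconstructs the classical construction that lives inside the cited reference (the Erd\H{o}s--Kunen--Mauldin \cite{EKM81} / Miller self-coding technique), and it does so correctly in outline. You correctly identify that the naive $<_L$-least-witness recursion only yields a $\mathbf{\Sigma}^1_2$ scale, that the entire content of the lemma is converting the assertion of $<_L$-leastness into clauses decodable from the real itself so that the sole surviving non-arithmetic ingredient is wellfoundedness of a coded order (hence $\mathbf{\Pi}^1_1$), and that the parity trick reconciles coding with domination. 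If you write this up in full, two points need to be made explicit: first, for ``$<_L$-least'' to be certifiable inside a coded $L_\gamma$ you need that the canonical wellordering restricted to the reals of $L_\gamma$ is an initial segment of $<_L$ (true, since $<_L$ orders by level of birth) and that $L_\gamma$ satisfies a sufficient finite fragment of $\mathrm{ZF}+V=L$; your appeal to condensation is exactly the right tool here, but this absoluteness is where the argument could silently fail if the checks are too weak. Second, the stage-$\alpha$ domination target must itself be computable inside the coded structure --- e.g.\ take $g_\alpha$ to be the $\alpha$-th real of $L$ in the $<_L$-order rather than a family fixed externally --- otherwise the membership test cannot verify that clause. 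With those details pinned down, your argument is the standard proof; what the paper's citation buys is brevity and modularity, while your reconstruction buys self-containedness and shows precisely where $V=L$ enters (the $\mathbf{\Sigma}^1_2$ wellordering and condensation).
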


\begin{defn}
For $f,g \in \omega ^ \omega$, $f\leq^* g$ if $f(n)\leq g(n)$ for all but finitely many $n\in\omega$. A \textbf{scale} is a subset of $\omega^ {\omega}$ well-ordered by $\leq^*$ such that each $f\in \omega ^ \omega$  is $\leq^*$ some member of the scale.
\end{defn}

Bartoszy\'{n}ski and Tsaban \cite{BaTs006} proved:

\begin{lem} Let $S\subseteq\omega^ \omega$ be a scale. Then $S\cup [\omega]^{< \omega}$ is a Menger set  of reals which is not $\sigma$-compact. 
\end{lem}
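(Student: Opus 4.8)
The goal is to show that if $S \subseteq \omega^\omega$ is a scale, then $S \cup [\omega]^{<\omega}$ is a Menger set of reals which is not $\sigma$-compact. Here we identify $[\omega]^{<\omega}$ with the finite subsets of $\omega$, viewed as a subset of $2^\omega \cong$ a subset of the reals, so that $S \cup [\omega]^{<\omega}$ lives naturally in a Polish space. The plan has two independent halves: establishing Mengerness, and ruling out $\sigma$-compactness.

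For the \textbf{non-$\sigma$-compactness}, I would argue that a scale $S$, being cofinal and well-ordered of length $\mathfrak d$ under $\leq^*$, is a dominating family, and dominating families are never $\sigma$-compact. More concretely, every compact subset $K$ of $\omega^\omega$ is bounded in $\leq^*$ (indeed bounded everywhere, since the projections are bounded on a compact set), so a countable union of compact sets is $\leq^*$-bounded by a single $g \in \omega^\omega$. But $S$ is unbounded (it is cofinal in $\leq^*$, hence contains elements above any given $g$), so $S$, and a fortiori $S \cup [\omega]^{<\omega}$, cannot be covered by countably many compact sets. The countable augmentation by $[\omega]^{<\omega}$ is irrelevant to this half.

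For \textbf{Mengerness}, the point of adding $[\omega]^{<\omega}$ is to make the set combinatorially well-behaved with respect to the Hurewicz/Scheepers combinatorial characterization of Menger sets of reals. The natural route is to invoke the known combinatorial characterization: a set $X \subseteq \omega^\omega$ is Menger iff for every sequence of "clopen covers" coming from the standard basis one can select finitely many per stage covering $X$, and this reduces to a statement about the sequence of initial-segment partial functions. I would use the fact that $[\omega]^{<\omega}$ is dense in the sense that every element of $\omega^\omega$ is a $\leq^*$-limit controlled from below, and that the well-ordering of $S$ by $\leq^*$ lets one enumerate $S$ in an order-type that is $\leq^*$-increasing, so that a diagonal/bookkeeping selection of finite subfamilies can catch each element of $S$ at the stage where it first becomes dominated. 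The scale structure guarantees every real is eventually dominated by a scale member, and the finite sets $[\omega]^{<\omega}$ absorb the finitely-many-exceptions slack in the definition of $\leq^*$.

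The \textbf{main obstacle} is the Mengerness argument, specifically converting the order-theoretic scale property into a verified selection for an arbitrary sequence $\{\mathcal U_n\}_{n<\omega}$ of open covers. The subtlety is that being a scale (well-ordered and dominating) is strictly stronger than merely being unbounded or dominating, and it is precisely the well-ordered, increasing enumeration — together with the finite sets — that drives the known proof of Bartoszy\'nski and Tsaban. Since this lemma is attributed to \cite{BaTs006}, I would ultimately cite their combinatorial analysis for the selection step rather than reprove it, and concentrate the self-contained work on the non-$\sigma$-compactness half and on checking that $S \cup [\omega]^{<\omega}$ is genuinely presentable as a subset of the reals so that the selection principle applies verbatim.
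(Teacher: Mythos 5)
Your non-$\sigma$-compactness argument contains a genuine error, and it is precisely the half you chose to do by hand. The boundedness fact you invoke --- compact subsets of $\omega^\omega$ are $\leq^*$-bounded --- is true but does not apply here: once $[\omega]^{<\omega}$ is adjoined, the ambient space is necessarily $\mathcal{P}(\omega)\cong 2^\omega$ (that is the only identification under which the union makes sense and the finite sets are limits of the infinite ones, which is the whole point of adding them), and a compact set $K\subseteq S\cup[\omega]^{<\omega}$ is a compact subset of $\mathcal{P}(\omega)$, not of $\omega^\omega$; its trace $K\cap[\omega]^{\omega}$ need be neither compact nor $\leq^*$-bounded. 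Indeed, your reasoning applied verbatim to $D=[\omega]^{\omega}$ (whose enumeration functions form a dominating family) would show that $[\omega]^{\omega}\cup[\omega]^{<\omega}=\mathcal{P}(\omega)$ is not $\sigma$-compact --- but $\mathcal{P}(\omega)$ is compact. So ``unbounded infinite part'' alone can never rule out $\sigma$-compactness, and your claims that the augmentation by $[\omega]^{<\omega}$ is ``irrelevant to this half'' and that only dominance (not well-ordering) matters are exactly backwards: the chain structure of $S$ under $\leq^*$ is what makes non-$\sigma$-compactness true. (Two smaller slips: ``covered by countably many compact sets'' is not the same as being $\sigma$-compact, since every set of reals is covered by the intervals $[-n,n]$; and non-$\sigma$-compactness does not pass ``a fortiori'' to supersets --- the irrationals are not $\sigma$-compact but $\mathbb{R}$ is.)

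A correct elementary argument must use the well-ordering. Suppose $S\cup[\omega]^{<\omega}=\bigcup_{n}K_n$ with each $K_n$ compact. Since $[\omega]^{<\omega}$ is dense in $\mathcal{P}(\omega)$ and $\mathcal{P}(\omega)$ is compact Hausdorff, hence Baire, some $K_n$ is somewhere dense; being closed, it contains a basic clopen set $[s,m]=\{A\subseteq\omega : A\cap m=s\}$, and then every infinite member of $[s,m]$ lies in $S$. But one easily constructs two infinite $A,B\in[s,m]$ whose enumeration functions are $\leq^*$-incomparable (alternate long blocks on which one set is much sparser than the other), contradicting the fact that $\leq^*$ linearly orders $S$. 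As for the Menger half: deferring to Bartoszy\'nski--Tsaban is legitimate, and is in fact exactly what the paper does --- it quotes this lemma from \cite{BaTs006} without proof. Stripped of the incorrect half, your proposal therefore reduces entirely to that citation.
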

Thus, to complete the proof of Theorem \ref{main}, it suffices to prove that the union of  a co-analytic set of reals  with a countable set of reals is co-analytic. Let these be $A,B$ respectively. Then $A\cup B={\mathbb{R}}\setminus ((\mathbb{R}\setminus A)\cap (\mathbb{R}\setminus B))$. Both $\mathbb{R}\setminus A$ and  $\mathbb{R}\setminus B$ are analytic by $14.4 $ of \cite{Kec95}; the intersection of two analytic set is analytic, so $A\cup B$ is co-analytic.\hfill\qedsymbol

One can in fact from $V = L$ construct a co-analytic Menger topological group which is not $\sigma$-compact  {\textemdash } see \cite{To}.
On the other hand, co-analytic determinacy implies one cannot {\textemdash } see \cite{T}.

We conclude by asking again,
\begin{que}
Is there a ZFC example of a co-analytic Menger space which is not $\sigma$-compact?
\end{que}

\noindent \textbf{Note:} After this paper was submitted, Y. Peng answered Questions 2 and  3 in the negative.

{\rm Franklin D. Tall, Department of Mathematics, University of Toronto, \\Toronto, Ontario M5S 2E4, CANADA}\\
{\it e-mail address:} {\rm tall@math.utoronto.ca}\\

{\rm Se{\c{c}}il Tokg\"oz, Department of Mathematics, Hacettepe University,\\ Beytepe, 06800, Ankara, TURKEY}\\
{\it e-mail address:} {\rm secilc@gmail.com}
\end{document}